\numberwithin{equation}{section}
\theoremstyle{plain}
\newtheorem{theorem}{Theorem}[section]
\newtheorem{lemma}[theorem]{Lemma}
\newtheorem{corollary}[theorem]{Corollary}
\newtheorem{remark}[theorem]{Remark}
\newtheorem{definition}[theorem]{Definition}
\newtheorem{proposition}[theorem]{Proposition}
\newcommand{\ra}[1]{\renewcommand{\arraystretch}{#1}}
\newcommand{\discretized}[1]{\ensuremath{\boldsymbol{#1}}}
\providecommand{\keywords}[1]
{
  \small	
  \textbf{\textit{Keywords:}} #1
}
\title{Efficient approximation of high-dimensional exponentials by tensor networks}
\author{Martin Eigel, Nando Farchmin, Sebastian Heidenreich\\ and Philipp Trunschke}
\date{\today}
\begin{document}

\maketitle

\abstract{%
In this work a general approach to compute a compressed representation of the exponential $\exp(h)$ of a high-dimensional function $h$ is presented.
Such exponential functions play an important role in several problems in Uncertainty Quantification, e.g.\ the approximation of log-normal random fields or the evaluation of Bayesian posterior measures.
Usually, these high-dimensional objects are numerically intractable and can only be accessed pointwise in sampling methods.
In contrast, the proposed method constructs a functional representation of the exponential by exploiting its nature as a solution of a partial differential equation.
The application of a Petrov-Galerkin scheme to this equation provides a tensor train representation of the solution for which we derive an efficient and reliable a posteriori error estimator.
This estimator can be used in conjunction with any approximation method and the differential equation may be adapted such that the error estimates are equivalent to a problem-related norm.
Numerical experiments with log-normal random fields and Bayesian likelihoods illustrate the performance of the approach in comparison to other recent low-rank representations for the respective applications.
Although the present work considers only a specific differential equation, the presented method can be applied in a more general setting.
We show that the proposed method can be used to compute compressed representations of $\varphi(h)$ for any holonomic function $\varphi$.
}

\medskip
\noindent
\keywords{%
    uncertainty quantification,
    dynamical system approximation,
    Petrov---Galerkin,
    a posteriori error bounds,
    tensor product methods,
    tensor train format,
    holonomic functions,
    Bayesian likelihoods,
    log-normal random field
}

\section{Introduction}%
\label{exptt:sec:introduction}

In this work we consider the problem of approximating the exponential $u=\exp(h)$ of a differentiable multivariate function $h(y)$ in the tensor train format, when the exponent $h$ is already given in this format.
Before presenting the new approach, we first illustrate the relevance of this often challenging task by two examples from the field of Uncertainty Quantification (UQ).

First, consider the steady state diffusion in a porous medium also known as Darcy's problem.
This is a standard benchmark problem in forward UQ and can be modeled by the second order linear partial differential equation
\begin{equation} \label{eq:darcy}
    -\operatorname{div}(\kappa \nabla w) = f,
\end{equation}
where $w$ is the concentration of some substance, the diffusion coefficient $\kappa$ determines the mobility of the particles and $f$ describes sources or sinks.
In practical applications the diffusion coefficient often takes the form
$\kappa(x,y) = \exp(\gamma(x,y))$ where the function $\gamma$ depends on the spatial coordinate $x$ as well as a random parameter $y$ that models uncertainties.
A popular approach for solving equation~\eqref{eq:darcy} is the \emph{stochastic Galerkin} (SG) method (cf.~\cite{LeMaitre_Knio,Ghanem_Spanos}) as it can be interpreted as an extension of the \emph{finite element method} (FEM) to parametric random PDEs.
In comparison to simpler sampling estimates such as different variants of the Monte Carlo method, the convergence of the SG method is potentially much faster since it exploits inherent structural properties of the considered problem, in particular anisotropic sparsity that can be captured in an appropriate generalized polynomial chaos basis.
Moreover, when using SG methods a reliable a posteriori error estimator can be computed, leading to a quasi optimal iterative construction of the discrete space as introduced in~\cite{EGSZ14,EGSZ15}.

However, to apply the SG method to the variational formulation of equation~\eqref{eq:darcy}, a functional representation of the diffusion coefficient $\kappa$ is required, which is provided by our proposed approach.

A second related example is the representation of some posterior probability density $\pi_\delta$ in the context of Bayesian inference~\cite{Stu10,kaipio2006statistical} via the exponential of the log-likelihood $\ell(y; \delta)$ and the prior $\pi_0$ as
\begin{align*}
     \frac{\mathrm{d}\pi_\delta}{\mathrm{d}\pi_0}(y)
     \,\propto\, \exp(\ell(y; \delta)) .
\end{align*}
In UQ this task arises for example in the parameter reconstruction of model data via inverse problems~\cite{Farchmin2020,mo2019deep,lassila2013reduced}.
If a functional representation of $\exp(\ell(y; \delta))$ can be constructed, it may for instance be used to efficiently generate independent posterior samples~\cite{dolgov2019TTdensities} or to compute high-dimensional quantities of interest such as moments or marginals~\cite{EGM20}.
Note that, if the covariance matrix for the sought distribution is too close to a singular matrix, it is reasonable to seek a suitable a basis transform before using a polynomial approximation of the probability density function (see e.g.~\cite{EGM20}).

\vspace{1ex}

The preceding two examples highlight the benefits of a functional representation of the occurring exponentials, which however is difficult to obtain.
A classical approach to represent any multivariate function $u : \mathbb{R}^M\to \mathbb{R}$ numerically is to choose a set of $d$ polynomial basis functions $\{p^{m}_i(y_m)\}_{i=0,\ldots,d-1}$ for each coordinate $y_m$ and each $m=1,\ldots M$, and then apply the expansion
\begin{equation*} \label{eq:intro:e_expansion}
    u(y) \approx \sum_{i_1=0}^{d-1}\cdots\sum_{i_M=0}^{d-1} c[i_1,\ldots,i_M]\, p^{1}_{i_1}(y_1)\cdots p^{M}_{i_M}(y_M).
\end{equation*}
Note that in both examples mentioned above, the dimension $M$ of the parameter vector $y$ may vary for practical problems from just a few to well over a hundred.
This renders a standard product basis representation like this unfeasible since $d^M$ coefficients would be required.
One remedy to this problem is to use a sparse representation of the coefficient tensor $c$.
This works well as long as the function $u$ can be sparsely represented in the chosen product basis.
However, to represent the exponential function $u(y) := \exp(h(y))$ higher-order multivariate polynomials are necessary and theoretical results indicate~\cite{Mugler2013} that the number of required coefficients in a sparse representation quickly becomes intractable in practice.
To mitigate this problem tensor network representations~\cite{Grasedyck2013} have been successfully applied to both examples.
This work focuses in particular on the \emph{tensor train} (TT) format~\cite{OT09}, also know as matrix product states, as a technique to compress the high-dimensional coefficient tensor $c$.
Given such a representation of the diffusion coefficient $\kappa$ it is easy to apply the SG algorithm~\cite{EMPS20}. If the likelihood function $\exp(\ell(y; \delta))$ is given in TT format quantities of interest~\cite{EGM20} can be computed promptly and samples can be drawn efficiently~\cite{dolgov2019TTdensities}.

To make the proposed approximation applicable to a wide range of problems, it is favourable to rely on non-intrusive techniques, which only require knowledge of the target function in some parameter realizations.
In contrast to e.g.\ the stochastic Galerkin Finite Element approach, these techniques have the advantage that they can utilize already existing code without the need to change any of its functionality.
Most renowned in this context are \emph{TT-Cross} methods~\cite{OT10,DS20,DS19} as well as the \emph{variational Monte Carlo} (VMC) method~\cite{ESTW19}.
Cross approximation based methods represent the function in a tensorized Lagrange interpolation basis and thus only require function evaluations in the interpolation points to obtain entries of the coefficient tensor.
The algorithm aims to provide a low-rank TT approximation of the coefficient tensor by means of a generalized skeleton decomposition.
If the low-rank assumption is satisfied, the target function has to be evaluated typically only on a small subset of the interpolation points, which is chosen actively by the cross approximation algorithm.
However, to the knowledge of the authors, there exists no bound for the required number of function evaluations to achieve a given error.
The VMC method on the other hand is a tensor regression technique that constructs an optimal low-rank approximation based on (random) training pairs of parameter realizations and target function evaluations.
The error of this method can be bounded by a constant times the best approximation error with high probability~\cite{EST21,T21}.
For a most general approach and a large degree of non-intrusiveness, we chose the VMC method since its application solely relies on samples generated a priori and does not require evaluations of the target function during runtime.
This allows for an easier applicability as in particular no interfaces to existing legacy (simulation) codes have to be provided.

Concerning the diffusion coefficient $\kappa$ in equation~\eqref{eq:intro:e_expansion}, there exist several approaches to represent the coefficient tensor $c$ in the TT format.
In~\cite{EMPS20} an exact representation is developed for the case that $\gamma(x,y)$ is an affine function in the parameters $y$.
However, since the construction is tailored to the specific structure of $\gamma$, it is not straight-forward to apply in other settings.
As an alternative, it is possible to construct an approximation in TT format by exploiting the connection of mean an variance of $\kappa$ and its exponent $\gamma$~\cite{EHLMW14} while also providing bounds for the approximation error in the Frobenius norm.
The use of numerical quadrature in~\cite{EHLMW14} is replaced by a block cross approximation algorithm, which constructs the TT format for $\kappa$ from a few evaluations of the entry-wise formula in~\cite{DKLM15} to improve efficiency.

To circumvent the dimension independent but slow convergence rate of Monte Carlo sampling in Bayesian inference, measure transport approaches~\cite{Marzouk2016,normalizingFlows,andrle2021invertible} gained a lot of popularity in recent years.
These approaches allow for the fast generation of independent samples from the posterior but utilizing the functional representation to compute quantities of interest such as mean values or higher-order moments analytically is difficult.
An additional drawback stems from the slow convergence of the (stochastic) gradient descent which usually is required to learn a measure transport.
Recent results to gain such a functional representation in TT format are presented in~\cite{EGM20,dolgov2019TTdensities}, which is motivated by the low-rank representability of Gaussian densities~\cite{rohrbach2020rank}.
However, the approximation task itself can be quite challenging, since one needs to guarantee small Lipschitz constants of the transport map~\cite{hagemann2020} to achieve numerical stability.

The above mentioned methods for the representation of an exponential are either highly intrusive or are at least tailored to a specific problem structure.
Furthermore, a direct approach of a solely sample based non-intrusive reconstruction using e.g.\ tensor recovery~\cite{ESTW19} or cross approximation~\cite{ballani2014tree,OT10} requires an infeasible amount of training data or an unrealistically good initial guess to obtain similar approximation results as the other methods.
One possibility to mitigate the requirement of a large number of samples is to utilize the scaling trick, i.e.\ to approximate $\exp(bh)$ for some $b\ll1$ and to compute $\exp(h) = \exp(bh)^{1/b}$.
Using the concept of functional tensor trains~\cite{Gorodetsky2019} could be a feasible approach to circumvent these issues as well as they do not rely on an a priori choice of a possibly ill-suited polynomial basis.
Unfortunately, to the best of our knowledge, no results for the problem at hand are known so far.

In this paper we propose to compute the exponential of a function given in TT format by solving the system
\begin{equation*}
        \nabla u - u\nabla h = f, \quad u(y_0) = 0
\end{equation*}
of \emph{partial differential equations} (PDEs) via Galerkin projection.
This makes it possible to derive an approximation of the exponential and establish error bounds via an equivalence of the discrete residuum and an energy norm.
The system operator and right-hand side can be discretized efficiently in the TT format which enables to solve the problem using the ALS method.
We restrict our examinations mainly to first order systems and to the representation of the exponential in the TT format.
However, a specific choice of the tensor network or the ODE is not necessary and both may be adapted to other applications.
In fact, we argue that our approach is not limited to the reconstruction of exponentials, but can be extended to other functions which constitute the solution of a linear homogeneous differential equation with polynomial coefficients, namely holonomic functions.
These functions are a powerful tool in computer algebra since they are smooth, can be represented by a finite amount of data and satisfy several closure properties.
By our work we extend these properties by adding an algorithmic representability in the TT format and hope that this will provide a new and practical way to represent functions in the ever more important field of high-dimensional numerical computations.

The rest of the paper is structured as follows.
After introducing basic notation in Section~\ref{sec:notation}, Section~\ref{sec:univariate} presents the general framework of our approach.
We describe the system of differential equations that we solve and the realization of the Galerkin scheme for the univariate case.
Subsequently, we define the energy norm and show equivalence of this norm to the discrete residual.
We conclude the section by generalizing the univariate results to the multivariate case and establish a theoretical foundation to apply our approach to other types of holonomic functions.
Section~\ref{sec:multivariate} recalls the basic workings of the ALS method and constructs a low-rank representation of the system operator and right-hand side in the TT format.
This representation scales linearly in the problem dimension and has bounded ranks dependent only on the ranks of the exponent $h$.
Additionally, we comment on the algorithmic realization of our method and give detail on some numerical intricacies in Section~\ref{sec:multivariate:algorithm}.
Numerical applications are discussed in Section~\ref{exptt:sec:experiments}.
There we explain our choice of discretization and error evaluation and give further details on the Darcy problem and Bayesian likelihoods before we discuss our numerical results.
The paper concludes with an outlook on further applications of the suggested approach as well as possible future research.

\subsection{Preliminaries}%
\label{sec:notation}

In the following we introduce the notation used throughout the paper.
If not specified differently, sets are denoted by calligraphic letters.
Denote by $[n]\subset\mathbb{N}_0:=\mathbb{N}\cup\{0\}$ the set of integers $\{0,\ldots,n-1\}$.
For sets $\mathcal{X}, \mathcal{Y}$, we denote by $C^{k,\alpha}(\mathcal{X};\mathcal{Y})$ the space of $k$-times differentiable functions from $\mathcal{X}$ to $\mathcal{Y}$ with $\alpha$-H\"older-continuous $k$\textsuperscript{th} derivative.
Denote by $L^p(\mathcal{X},\rho;\mathcal{Y})$ the weighted $L^p$-space for any $p\in\mathbb{N}$, weight function $\rho$, and two sets $\mathcal{X}$, $\mathcal{Y}$.
The same notation is used for Sobolev spaces $W^{k,p}(\mathcal{X},\rho;\mathcal{Y})$ and we write $H^k(\mathcal{X},\rho;\mathcal{Y}) = W^{k,2}(\mathcal{X},\rho;\mathcal{Y})$.
If $\rho\equiv 1$ we omit the weight function and if $\mathcal{Y}=\mathbb{R}$, we omit the image space.
The unit sphere in any normed space $\mathcal{X}$ is defined by
\begin{align*}
  S(\mathcal{X}) := \{ v\in\mathcal{X} \colon \Vert v \Vert_\mathcal{X} = 1 \}
\end{align*}
and the dual space of $\mathcal{X}$ is denoted by $\mathcal{X}^*$.
For any Hilbert space $\mathcal{X}$ and any subspace $\mathcal{Y}\subseteq\mathcal{X}$, the orthogonal projection onto $\mathcal{Y}$ is denoted by $P_{\mathcal{Y}}:\mathcal{X}\to\mathcal{Y}$.
We use standard notation for multiindices $\mu,\nu\in\mathbb{N}_0^M$ and additionally abbreviate sums over multiindices by
\begin{align*}
  \sum_{\nu=1}^{\mu}
  := \sum_{\nu_1=1}^{\mu_1} \dots \sum_{\nu_M=1}^{\mu_M}.
\end{align*}
Regular letters are used to notate standard non-discretized operators and functions.
The discretized versions of objects are denoted by boldface symbols.
For any fixed orthonormal set of functions
$\{ P_\mu \}_{\mu\in[d]^M} \subset L^2(\mathbb{R}^M,\rho)$
we define the finite dimensional subspace
\begin{align*}
  \mathcal{V}_d
  := \operatorname{span}\{P_\mu \colon \mu\in[d]^M\}
  \subset L^2(\mathbb{R}^M,\rho).
\end{align*}
Then, functions $w\in\mathcal{V}_d$ can be expressed by
\begin{align*}
  w(x) = \sum_{\mu\in[d]^M} \discretized{w}[\mu] P_\mu(x)
  \qquad\mbox{with}\qquad
  \discretized{w} \in \mathbb{R}^{d \times\dots\times d}.
\end{align*}
The space $\mathcal{V}_d$ is hence isomorphic to the space of coefficient tensors $\mathbb{R}^{d^M}$.

Note that the size of the coefficient tensor $\discretized{w}$ grows exponentially with the order $M$.
This is commonly referred to as the \emph{curse of dimensionality}.
To mitigate this exponential dependence on $M$, we employ a low-rank decomposition of the tensor $\discretized{w}$.
There are many tensor decompositions discussed in the
literature~\cite{Grasedyck2013,Hac12,Khoromskij2015,Kolda2009} but due to its simplicity and the wide availability in numerical libraries we have chosen the \emph{tensor train} (TT) format for our derivations.
It is one of the best-studied tensor formats in numerical mathematics (cf.~\cite{OT09,HRS12,rohrbach2020rank,EPS17}) and can efficiently represent all the tensors in our method.
Moreover, the employed optimization algorithm has been used reliably in many applications.
We want to stress, however, that the approach presented in this work can be applied to any other tensor decomposition.

In the following we provide a brief overview of the notation used with the tensor train format.
For further details, we refer the reader to~\cite{BSU16,EPS17} and the references therein.
The TT representation of a tensor $\discretized{w}\in\mathbb{R}^{d^M}$ is given as
\begin{align*}
  \discretized{w}[\mu]
  &= \sum_{k=1}^{r} \prod_{m=1}^{M} \discretized{w}_m[k_m,\mu_m,k_{m+1}]
  \qquad\mbox{for any }\mu\in[d]^M,
\end{align*}
with order three \emph{component tensors} (or \emph{cores}) $\discretized{w}_m\in\mathbb{R}^{r_m\times d\times r_{m+1}}$.
Here, $r=(r_1, \dots, r_{M+1})$, with $r_1=r_{M+1}=1$.
If all ranks $r_m$ are minimal, this is called \emph{tensor train decomposition} of $\discretized{w}$ with TT rank $r$.
The theoretical number of required degrees of freedom of a TT representation is given by
\begin{align}
  \label{eqn:multivariate:tt_rank}
  \operatorname{tt-dofs}(\discretized{w})
  = \sum_{m=1}^{M-1} \bigl(r_m dr_{m+1} - r_{m+1}^{2}\bigr) + r_M d,
\end{align}
which shows that the complexity of tensor trains behaves like $\mathcal{O}(Md\hat{r}^2)$ for $\hat{r}=\max\{ r_1,\ldots,r_M \}$.
In contrast to full tensor representations with complexity $\mathcal{O}(d^M)$, tensor trains depend only linearly on the order $M$.
As a result, the TT format is especially efficient for a small maximal rank $\hat{r}$.

In a similar fashion, for any $d,q\in\mathbb{N}$ we can express linear operators $W\colon \mathcal{V}_q\to\mathcal{V}_d$ in the tensor train format.
For this recall that the application of $W$ to $v\in\mathcal{V}_q$ reads
\begin{align*}
  Wv(x) = \sum_{\mu\in[d]^M} \sum_{\nu\in[q]^M} \discretized{W}[\mu,\nu]\discretized{v}[\nu] P_{\mu}(x).
\end{align*}
The TT representation of the tensor operator $\discretized{W}\colon\mathbb{R}^{q^M}\to\mathbb{R}^{d^M}$ is thus determined by
\begin{align*}
  \discretized{W}[\mu,\nu]
  &= \sum_{k=1}^{r} \prod_{m=1}^{M} \discretized{W}_m[k_m,\mu_m,\nu_m,k_{m+1}]
  \qquad\mbox{for any }\mu\in[d]^M \mbox{ and } \nu\in[q]^M,
\end{align*}
with the order four component tensors $\discretized{W}_m\in\mathbb{R}^{r_m\times d\times q\times r_{m+1}}$.
The TT decomposition always exists and can be computed using the hierarchical singular value decomposition (SVD)~\cite{Ose11}.
A truncated hierarchical SVD leads to quasi-optimal approximations of the TT decomposition in the Frobenius norm~\cite{OT09,Gra09,HS14}.
This can also be applied to tensors which are already represented in the TT format to obtain a TT decomposition with a lower rank.
This process is referred to as \emph{rounding}.
Note that sums and products can be computed efficiently in the TT format~\cite{Ose11}, which is crucial for the proposed method.
Moreover, many of the occurring tensors are of the form $\discretized{W} = \sum_{m=1}^M \discretized{B}_m$,
where, for any $\mu\in[d]^M$,
\begin{equation*}
  \discretized{B}_m[\mu]
  = \sum_{k=1}^{r} \prod_{n=1}^{m-1} \discretized{U}_n[k_n,\mu_n,k_{n+1}]
  \discretized{C}_m[k_m,\mu_m,k_{m+1}]
  \prod_{n=m+1}^M \discretized{V}_n[k_n,\mu_n,k_{n+1}] .
\end{equation*}
Such tensors are said to have a \emph{Laplace-like} structure~\cite{kazeev2012LaplaceLike} and are representable as tensors of rank $2r$.

To concisely present the Laplace-like structure later on, we define the following abbreviations.
Let $\discretized{W}_m$ and $\discretized{w}_m$ be component tensors of a TT operator and a TT tensor, respectively, and assume that the second dimension of both tensors is $d$.
Then $\discretized{W}_m^\intercal\discretized{w}_m$ denotes the contraction
\begin{equation}
    (\discretized{W}_m^\intercal\discretized{w}_m)[k_m\cdot\ell_m, \mu_m, k_{m+1}\cdot\ell_{m+1}]
    = \sum_{j=1}^d \discretized{W}_m[k_m,j,\mu_m,k_{m+1}] \discretized{w}_m[\ell_m,j,\ell_{m+1}],
\end{equation}
where $k\cdot\ell$ may be any bijection that maps a pair of rank indices $(k,\ell)$ onto a new rank index.
Similarily, if $\discretized{\tilde W}_m$ is a component tensor of another TT operator for which the second dimension is $d$, $\discretized{W}_m^\intercal\discretized{\tilde W}_m$ denotes the contraction
\begin{equation}
    (\discretized{W}_m^\intercal\discretized{\tilde{W}}_m)[k_m\cdot\ell_m, \mu_m, \nu_m, k_{m+1}\cdot\ell_{m+1}]
    = \sum_{j=1}^d \discretized{W}_m[k_m,j,\mu_m,k_{m+1}] \discretized{\tilde W}_m[\ell_m,j,\nu_m,\ell_{m+1}] .
\end{equation}
Note that the resulting tensor is of the same order as $\discretized{w}_m$ or $\discretized{\tilde W}_m$, i.e.\ $3$ or $4$, respectively.
Furthermore, we define the concatenation along the first dimension of two TT operator cores $\discretized{W}_m$ and $\discretized{\tilde W}_m$, if all but the first dimension have the same size and if the first dimension of $\discretized{W}_m$ has size $r_m$, by
\begin{align*}
    \begin{bmatrix}
        \discretized{W}_m \\ \discretized{\tilde W}_m
    \end{bmatrix}[k_m, \mu_m, \nu_m, k_{m+1}] = 
    \begin{cases}
        \discretized{W}_m[k_m,\hphantom{-r_m}\ \,\mu_m, \nu_m, k_{m+1}], &\mbox{if }k_m\leq r_m,\\
        \discretized{\tilde W}_m[k_m-r_m, \mu_m, \nu_m, k_{m+1}], &\mbox{if }k_m > r_m
    \end{cases}
\end{align*}
and similarily for two TT tensor cores $\discretized{w}_m$ and $\discretized{\tilde w}_m$.
If all but the last dimension have the same size and if the last dimension of $\discretized{W}_m$ has size $r_{m+1}$, we denote the concatenation along the last dimension by
\begin{align*}
    \begin{bmatrix}
        \discretized{W}_m & \discretized{\tilde W}_m
    \end{bmatrix}[k_m, \mu_m, \nu_m, k_{m+1}] = 
    \begin{cases}
        \discretized{W}_m[k_m, \mu_m, \nu_m, k_{m+1}\hphantom{-r_{m+1}}\ \,], &\mbox{if }k_{m+1}\leq r_{m+1},\\
        \discretized{\tilde W}_m[k_m, \mu_m, \nu_m, k_{m+1}-r_{m+1}], &\mbox{if }k_{m+1} > r_{m+1}.
    \end{cases}
\end{align*}
Combinations of the above are abbreviated in a straight-forward manner, e.g.\ by
\begin{align*}
    \begin{bmatrix}
        \begin{bmatrix}
            \discretized{W}_m & \discretized{\tilde W}_m
        \end{bmatrix}\\
        \begin{bmatrix}
            \discretized{\tilde{\tilde W}}_m & \discretized{\tilde{\tilde{\tilde{W}}}}_m
        \end{bmatrix}
    \end{bmatrix}
    =
    \begin{bmatrix}
        \discretized{W}_m & \discretized{\tilde W}_m \\
        \discretized{\tilde{\tilde W}}_m & \discretized{\tilde{\tilde{\tilde{W}}}}_m
    \end{bmatrix}
    \qquad\mbox{or}\qquad
    \begin{bmatrix}
        \begin{bmatrix}
            \discretized{W}_m \\
            \discretized{\tilde W}_m
        \end{bmatrix}\\
        \discretized{\tilde{\tilde W}}_m
    \end{bmatrix}
    =
    \begin{bmatrix}
        \discretized{W}_m \\
        \discretized{\tilde W}_m \\
        \discretized{\tilde{\tilde W}}_m
    \end{bmatrix}.
\end{align*}
Since $\discretized{W}_m, \discretized{\tilde{W}}_m, \discretized{\tilde{\tilde{W}}}_m$ and $\discretized{\tilde{\tilde{\tilde{W}}}}_m$ are tensors of order $4$ with equal second and third dimension ($d$ and $d'$, respectively), this notation can be interpreted as a standard concatenation of matrices over the ring $\mathbb{R}^{d\times d'}$.

\section{Approximation of exponentials via Galerkin projection}%
\label{sec:univariate}

In this section we demonstrate how the exponential of a function can be approximated by a Galerkin projection and derive computable bounds for the approximation error.
We emphasize again, that the presented orthogonal projection, as well as the derived a posteriori error bounds, can in principle be adapted to a broader class of multivariate holonomic functions.
These are functions that constitute solutions of a system of linear differential equations with polynomial coefficients as discussed in Section~\ref{sec:multivariate:operator}.

For any given exponent $h$, we construct a system of differential equations that has $\exp h$ as a unique solution and then use a Galerkin projection to construct an approximation to $\exp h$.
This also allows us to harvest well established results of the Galerkin method to obtain an a posteriori error control of the approximation~\cite{Bra07}.

We start with the description of the approach for univariate functions, subsequently derive upper and lower bounds of the approximation error in terms of the residual, and eventually generalize our results to the multivariate case.

\subsection{Approximation of univariate exponentials}%
\label{sec:univariate:setup}

Let $\rho$ be the standard Gaussian density and assume that the exponent in $C^1(\mathbb{R})\cap L^2(\mathbb{R},\rho)$ can be approximated by
\begin{align}
  \label{eqn:univariate:h_def}
  h(y) = \sum_{j=0}^{d_h-1} \discretized{h}[j] p_j(y),
\end{align}
where $\{p_j\}_{j=0}^{\infty}$ form an orthonormal basis in $L^2(\mathbb{R},\rho)$.
Consider the linear initial value problem
\begin{equation}
  \label{eqn:univariate:ode_inhomogeneous}
  \begin{aligned}
    u' - u\, h' &~= 0,\\
    u(y_0) &~= \exp h(y_0),
  \end{aligned}
\end{equation}
for an arbitrary $y_0\in\mathbb{R}$.
It is easy to verify that $u = \exp h$ is the unique solution to \eqref{eqn:univariate:ode_inhomogeneous}.
For $f(y) = \exp (h(y_0)) h'(y)$, the problem with inhomogeneous initial
condition~\eqref{eqn:univariate:ode_inhomogeneous} is equivalent to
\begin{equation}
  \label{eqn:univariate:ode}
  \begin{aligned}
    u' - u\, h' &~= f,\\
    u(y_0) &~= 0,
  \end{aligned}
\end{equation}
in the sense that $u$ is the solution of \eqref{eqn:univariate:ode} if and only
if $u + \exp h(y_0)$ is the solution of \eqref{eqn:univariate:ode_inhomogeneous}.
Although the choice of $y_0\in\mathbb{R}$ is arbitrary, it is advisable to choose the initial point such that $\rho(y_0)\gg0$ to avoid numerical precision issues.
Because of this and for the sake of simplicity, we assume $y_0=0$ in the following.

Let $\mathcal{X} = \{u \in H^2(\mathbb{R},\rho) \colon u(y_0) = 0\}$ and define the linear operator $B(v) = v' - v h'$.
The variational form of~\eqref{eqn:univariate:ode} then reads:
Find $u\in\mathcal{X}$ such that
\begin{align}
  \label{eqn:univariate:ode_var}
  ( B(u), v )_{\mathcal{V}} &~= (f, v )_{\mathcal{V}}
  \qquad\mbox{for all }v\in \mathcal{V},
\end{align}
where $(\bullet,\bullet)_{\mathcal{V}}$ denotes the inner product in $\mathcal{V} := L^2(\mathbb{R},\rho)$.
Since the weak solution $u$ of equation~\eqref{eqn:univariate:ode_var} is in $H^2(\mathbb{R}, \rho)$, it is regular enough to be a strong solution of the initial value problem~\eqref{eqn:univariate:ode}.
And since this problem satisfies the conditions of the Picard--Lindel\"of theorem, there exists a unique solution to the problem that depends continuously on the parameters.
Problem~\eqref{eqn:univariate:ode_var} is hence well posed.

For the Galerkin approximation of~\eqref{eqn:univariate:ode_var}, we define the ansatz space
$\mathcal{V}_{\mathrm{a}}:=\operatorname{span}\{p_j\colon j\in[d_a]\setminus\{0\}\}$ 
and the test space
$\mathcal{V}_{\mathrm{t}}:=\mathcal{V}_{d_t}$ for polynomial degrees $d_a,d_t\in\mathbb{N}_{>0}$.
With these spaces we can define discretized versions of $B$ and $f$ by
\begin{equation*}
    \discretized{B}_{ij} := (B(p_{j+1}), p_i)_{\mathcal{V}}
    \qquad\mbox{and}\qquad
    \discretized{f}_i := (f, p_i)_\mathcal{V} \qquad i\in[d_{\mathrm{t}}], \; j\in[d_{\mathrm{a}}] .
\end{equation*}
Consequently, the discretization of~\eqref{eqn:univariate:ode_var} reads
\begin{align*}
    \discretized{B} \discretized{u} = \discretized{f}.
\end{align*}

\begin{lemma} \label{lem:univariate:kerB}
    The operator $B\colon\mathcal{X}\to\mathcal{V}$ and the discretized operator $\discretized{B}\colon\mathbb{R}^{d_a}\to\mathbb{R}^{d_t}$ are injective for $d_{\mathrm{t}} > d_{\mathrm{a}}$, i.e.\ $\operatorname{ker}B = \{0\}$ and $\operatorname{ker}\discretized{B} = \{0\}$.
\end{lemma}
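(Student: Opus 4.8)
The plan is to prove the two injectivity statements by different means: $\ker B=\{0\}$ is an elementary ODE uniqueness argument, whereas $\ker\discretized B=\{0\}$ requires a structural analysis of the discretized operator and is where the hypothesis $d_{\mathrm t}>d_{\mathrm a}$ does its work. For the continuous operator, suppose $v\in\mathcal X$ satisfies $B(v)=v'-vh'=0$ in $\mathcal V$. Since $v\in H^2(\mathbb R,\rho)$ is regular enough to be a classical solution (as already used for well-posedness via Picard--Lindel\"of), the relation $v'=vh'$ holds pointwise, and integrating this linear homogeneous first-order ODE yields $v(y)=v(y_0)\exp\!\bigl(h(y)-h(y_0)\bigr)$, i.e.\ $v=C\exp h$ for some constant $C$. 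The constraint $v(y_0)=0$ encoded in $\mathcal X$ forces $C\exp h(y_0)=0$, and since the exponential never vanishes we obtain $C=0$ and $v\equiv0$. In words: the solution space of $B(v)=0$ is the line spanned by $\exp h$, and the one-point condition selects only the trivial element.

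For the discretized operator I would first translate the matrix statement into a projected-operator statement. If $\discretized c\in\ker\discretized B$ and $v:=\sum_j\discretized c_j\,p_{j+1}\in\mathcal V_{\mathrm a}$, then $(B(v),p_i)_{\mathcal V}=0$ for all $i\in[d_{\mathrm t}]$, i.e.\ $B(v)\perp\mathcal V_{d_{\mathrm t}}$; thus injectivity of $\discretized B$ is equivalent to injectivity of $P_{\mathcal V_{d_{\mathrm t}}}B|_{\mathcal V_{\mathrm a}}$, and it suffices to show that the columns $B(p_1),B(p_2),\dots$ remain linearly independent after projection onto the test space. To expose the structure I would use $p_{j+1}'=\sqrt{j+1}\,p_j$ for the orthonormal basis (the normalized Hermite polynomials associated with the Gaussian $\rho$), giving $B(p_{j+1})=\sqrt{j+1}\,p_j-h'\,p_{j+1}$ and hence $\discretized B_{ij}=\sqrt{j+1}\,\delta_{ij}-(h'p_{j+1},p_i)_{\mathcal V}$. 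This writes $\discretized B$ as a rectangular diagonal coming from the derivative, perturbed by the matrix of multiplication by $h'$.

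From this representation I would try to read off an echelon (staircase) pattern certifying linear independence of the columns. There are two natural candidates. The first uses leading coefficients: multiplication by $h'$ raises the degree by $\deg h-1$, so the top nonzero Hermite coefficient of $B(p_{j+1})$ sits at index $j+\deg h$, and these indices are distinct and strictly increasing in $j$ — this already re-proves that the $B(p_{j+1})$ are independent in $\mathcal V$, consistent with $\ker B=\{0\}$. The second anchors instead on the near-diagonal entries $\sqrt{j+1}\,\delta_{ij}$ furnished by the derivative term, which survive under $d_{\mathrm t}>d_{\mathrm a}$. The hypothesis $d_{\mathrm t}>d_{\mathrm a}$ is precisely what guarantees that enough pivot rows lie inside the truncated index range $[d_{\mathrm t}]$ for the staircase to persist after projection onto $\mathcal V_{d_{\mathrm t}}$.

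The step I expect to be the main obstacle is certifying that these pivots genuinely survive. The leading-coefficient pivots sit at rows up to $(d_{\mathrm a}-1)+\deg h$, so capturing all of them would force $d_{\mathrm t}$ to exceed $d_{\mathrm a}$ by roughly $\deg h$; to get by with only $d_{\mathrm t}>d_{\mathrm a}$ one must instead anchor the echelon on the derivative's diagonal entries $\sqrt{j+1}$ and rule out that the perturbation $(h'p_{j+1},p_i)_{\mathcal V}$ cancels them. Tracking the degree spread caused by the factor $h'$ and showing that no such cancellation destroys the needed pivots — equivalently, that the relevant near-diagonal square block of $\discretized B$ stays non-singular — is the delicate heart of the discrete argument, and it is here that the precise interplay between $d_{\mathrm t}$, $d_{\mathrm a}$ and the degree of $h$ must be pinned down.
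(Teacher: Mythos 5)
Your argument for the continuous operator is correct and is the same as the paper's: $B(v)=0$ is a first-order linear homogeneous ODE whose solutions form the line $C\exp h$, and the constraint $v(y_0)=0$ built into $\mathcal{X}$ kills the constant. Whether you invoke Picard--Lindel\"of (as the paper does) or integrate explicitly is immaterial.

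The discrete half, however, is not a proof: you set up the correct reduction (injectivity of $\discretized{B}$ is injectivity of $P_{\mathcal{V}_{d_t}}B|_{\mathcal{V}_{\mathrm{a}}}$), list two candidate pivot structures, and then explicitly leave the decisive step --- that the pivots survive truncation to the rows $[d_{\mathrm{t}}]$ and are not cancelled by the multiplication-by-$h'$ perturbation --- as an open ``delicate heart.'' That step is the entire content of the discrete assertion, so the proposal has a genuine gap. The paper closes it by committing to your \emph{first} candidate, the leading-coefficient (degree) argument: if $\deg h=0$ then $B(u)=u'$, and $u'=0$ forces $u$ constant, hence $u=0$ since $p_0\notin\mathcal{V}_{\mathrm{a}}$; if $\deg h>0$ then $\deg(h'u)\ge\deg(u)>\deg(u')$, so $\deg(B(u))\ge\deg(u)$ and $B(u)=0$ forces $\deg(u)=0$, again giving $u=0$. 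You should note that your objection to this route is nonetheless well taken: the degree argument shows $B(u)=0\Rightarrow u=0$, which coincides with $\discretized{B}\discretized{u}=0\Rightarrow\discretized{u}=0$ only once $B(\mathcal{V}_{\mathrm{a}})\subseteq\mathcal{V}_{d_t}$, i.e.\ in the regime $d_{\mathrm{t}}\ge d_{\mathrm{a}}+d_{\mathrm{h}}-1$ that the paper adopts immediately after the lemma (and which is automatic from $d_{\mathrm{t}}>d_{\mathrm{a}}$ when $d_{\mathrm{h}}\le 2$, e.g.\ for affine exponents). The correct fix is therefore not the perturbation analysis of the near-diagonal block you propose --- which would require controlling $(h'p_{j+1},p_i)_{\mathcal{V}}$ and is not how the hypothesis is actually used --- but to anchor the echelon on the top coefficient of $B(p_{j+1})$ at index $j+\deg h$ and to record the compatibility condition on $d_{\mathrm{t}}$, $d_{\mathrm{a}}$ and $d_{\mathrm{h}}$ under which all of these pivot rows lie in the test range.
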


\begin{proof}
    The first assertion holds if and only if the solution $u=0$ of the ODE $B(u) = u' - h'u = 0$ is unique.
    Since $\mathcal{X}\subset C^{1,0}(\mathbb{R})$ we can consider this equation in the classical sense.
    The claim follows, because $h'$ is locally Lipschitz continuous and the ODE satisfies the conditions for the Picard--Lindelöf theorem. \\
    To prove the second assertion, first assume that $\operatorname{deg}(h) = 0$.
    Then $B u = u'$, which can only be zero when $u$ is constant.
    Since $p_0 \not\in \mathcal{V}_{\mathrm{a}}$ this can only be the case for $u=0$.
    Now assume $\operatorname{deg}(h) > 0$.
    Then $\operatorname{deg}(h'u) \ge \operatorname{deg}(u)$ and $\operatorname{deg}(u') = \operatorname{deg}(u)-1$.
    Consequently, $\operatorname{deg}(B(u)) \ge \operatorname{deg}(u)$ and $B(u) = 0$ implies $\operatorname{deg}(u) = 0$.
    The only polynomial of degree $0$ in $\mathcal{V}_{\mathrm{a}}$ is $0$.
    This concludes the proof.
\end{proof}

Lemma~\ref{lem:univariate:kerB} demonstrates the advantage of considering the problem with homogeneous initial conditions~\eqref{eqn:univariate:ode_var}.
The condition $u(y_0) = \exp h(y_0)$ effectively reduces the dimension of the solution space by $1$.
Solving with homogeneous initial conditions over the space $\mathcal{X}$ makes this explicit and ensures that the operator $B\colon\mathcal{X}\to L^2(\mathbb{R},\rho)$ is injective.
This is not the case when considering $B:H^1(\mathbb{R},\rho)\to L^2(\mathbb{R},\rho)$.

In the following we assume that $d_{\mathrm{t}} = d_{\mathrm{a}} + d_{\mathrm{h}}-1$.
This ensures that $B(u_{\mathrm{a}}) \in\mathcal{V}_{\mathrm{t}}$ for any $u_{\mathrm{a}}\in\mathcal{V}_{\mathrm{a}}$ and that $f\in\mathcal{V}_{\mathrm{t}}$.
Since the bound $\operatorname{deg}(B(u)) \le \operatorname{deg}(u)+\operatorname{deg}(h)-1$ is sharp, this is the smallest natural number with this property.
The resulting system $\discretized{B}\discretized{u} = \discretized{f}$ is overdetermined and can only be solved in a least-squares sense.
This can be obtained by performing a QR-factorization of the form
\begin{align*}
  \discretized{B} = 
  \begin{pmatrix} 
    \discretized{Q} & \discretized{Q}_{\bot} 
  \end{pmatrix}
  \begin{pmatrix} 
    \discretized{R} \\ 
    0 
  \end{pmatrix}
\end{align*}
and by solving the regular quadratic linear system $\discretized{Q}^\intercal\discretized{B}\discretized{u} = \discretized{Q}^\intercal\discretized{f}$.
From this it can be seen that solving $\discretized{Bu} = \discretized{f}$ is equivalent to a standard Galerkin method with the reduced test space $\tilde{\mathcal{V}}_{\mathrm{t}} = Q \mathcal{V}_{\mathrm{t}}$.
This test space is optimal in the sense that it minimizes the residual over $\tilde{\mathcal{V}}_{\mathrm{t}}^\bot$.

\subsection{A posteriori error bounds}%
\label{sec:univariate:error}

We are now interested in relating the discrete residual of the variational 
form~\eqref{eqn:univariate:ode_var} to the error in an appropriate norm.
The dynamical system~\eqref{eqn:univariate:ode_var} naturally induces a norm which we may use for this purpose.

We begin by considering the following lemma about injective linear operators.

\begin{lemma}\label{lem:univariate:energyNorm}
    Let $\mathcal{V}$ be a normed space and $W : \mathcal{X} \to \mathcal{V}$ be an injective linear operator.
    Then $\Vert v \Vert_W := \Vert W(v) \Vert_{\mathcal{V}}$ defines a norm on $\mathcal{X}$.
\end{lemma}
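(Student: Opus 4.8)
The plan is to verify the three defining properties of a norm for $\Vert \cdot \Vert_W$, using the injectivity of $W$ for the positive-definiteness and the linearity of $W$ together with the norm axioms of $\Vert \cdot \Vert_{\mathcal V}$ for the remaining two properties. Concretely, I would show that $\Vert \cdot \Vert_W$ is (i) absolutely homogeneous, (ii) subadditive (triangle inequality), and (iii) positive definite, i.e.\ $\Vert v \Vert_W = 0$ if and only if $v = 0$.

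First I would treat homogeneity. For any scalar $\lambda$ and any $v\in\mathcal X$, linearity of $W$ gives $W(\lambda v) = \lambda W(v)$, so
\begin{align*}
    \Vert \lambda v \Vert_W = \Vert W(\lambda v) \Vert_{\mathcal V} = \Vert \lambda W(v) \Vert_{\mathcal V} = \vert\lambda\vert\, \Vert W(v) \Vert_{\mathcal V} = \vert\lambda\vert\, \Vert v \Vert_W,
\end{align*}
where the third equality is the homogeneity of $\Vert \cdot \Vert_{\mathcal V}$. Next, for the triangle inequality, linearity of $W$ yields $W(v+w) = W(v) + W(w)$ for $v,w\in\mathcal X$, and applying subadditivity of $\Vert \cdot \Vert_{\mathcal V}$ gives
\begin{align*}
    \Vert v + w \Vert_W = \Vert W(v) + W(w) \Vert_{\mathcal V} \le \Vert W(v) \Vert_{\mathcal V} + \Vert W(w) \Vert_{\mathcal V} = \Vert v \Vert_W + \Vert w \Vert_W.
\end{align*}
Both of these steps are purely formal consequences of $\Vert \cdot \Vert_{\mathcal V}$ being a norm and $W$ being linear; non-negativity $\Vert v \Vert_W \ge 0$ is inherited directly from $\Vert \cdot \Vert_{\mathcal V}$.

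The one place where the hypotheses genuinely enter is positive-definiteness, and this is the only step requiring more than a one-line check. If $v = 0$, then $W(v) = 0$ by linearity and hence $\Vert v \Vert_W = 0$. Conversely, suppose $\Vert v \Vert_W = \Vert W(v) \Vert_{\mathcal V} = 0$. Since $\Vert \cdot \Vert_{\mathcal V}$ is a norm, this forces $W(v) = 0$, i.e.\ $v \in \operatorname{ker} W$; injectivity of $W$ then gives $v = 0$. I expect no serious obstacle here: the content of the lemma is essentially the observation that pulling back a norm along an injective linear map again yields a norm, and the injectivity assumption is exactly what is needed to prevent a nonzero $v$ from having zero pseudo-norm. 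The main subtlety to state cleanly is the role of injectivity in the nondegeneracy direction, which is why this lemma is stated separately and then applied to the operator $B$ (shown injective in Lemma~\ref{lem:univariate:kerB}) to define the energy norm.
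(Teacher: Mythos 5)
Your proposal is correct and follows essentially the same route as the paper: homogeneity and the triangle inequality are obtained directly from the linearity of $W$ and the norm axioms of $\Vert\cdot\Vert_{\mathcal V}$, while positive definiteness is deduced from $W(v)=0$ and the injectivity of $W$. The only difference is that you write out the formal one-line computations that the paper leaves implicit.
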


\begin{proof}
    Absolute homogeneity and the triangle inequality follow directly from the linearity of $W$ and from the fact that $\Vert \bullet\Vert_{\mathcal{V}}$ is a norm. 
    To show positive definiteness, let $v\in \mathcal{X}$ be such that $\Vert W(v) \Vert_{\mathcal{V}}=0$.
    With the injectivity of $W$, which yields $\operatorname{ker}W = \{0\}$, it directly follows
    that $v\equiv 0$.
\end{proof}

Since $B$ is injective by Lemma~\ref{lem:univariate:kerB}, it follows from Lemma~\ref{lem:univariate:energyNorm} that $\Vert\bullet\Vert_B$ is a norm.
In the broader context of variational methods for elliptic PDEs, we refer to this norm as the \emph{energy norm} induced by the system~\eqref{eqn:univariate:ode_var}.
With this we are subsequently able to prove that the discrete residual $\Vert \discretized{f}-\discretized{B}\discretized{u}_{\mathrm{a}}\Vert_2$ is equivalent to the error $\Vert u-u_{\mathrm{a}}\Vert_B$ in the energy norm for any discrete object $u_{\mathrm{a}}\in \mathcal{V}_{\mathrm{a}}$.
This is important for mainly two reasons.
First, by minimizing the discrete residual in the least-squares sense, we get a guarantee that the distance to the exact solution in the energy norm is minimized as well.
Second, for any discrete approximation $u_{\mathrm{a}}\in \mathcal{V}_{\mathrm{a}}$, the discrete residual represents a reliable and efficient estimator for the approximation error in the energy norm.
This, in principle, allows to adaptively control the number of steps of an iterative solver without any computational overhead. 

To show the equivalence, we first establish the relation of the discrete and continuous residual in the following lemma.

\begin{lemma}%
    \label{lem:univariate:dualNorm}
    Let $\mathcal{V}$ be a Hilbert space and $\mathcal{V}_{\mathrm{t}}\subseteq \mathcal{V}$.
    It holds for any $R\in\mathcal{V}^*$ that
    \begin{align*}
        \Vert R\Vert_{\mathcal{V}_{\mathrm{t}}^*}
        = \Vert P_{\mathcal{V}_{\mathrm{t}}} r\Vert_{\mathcal{V}},
    \end{align*}
    where $r\in\mathcal{V}$ denotes the Riesz representative of $R$ in $\mathcal{V}$ and $P_{\mathcal{V}_{\mathrm{t}}}$ is the orthogonal projection from $\mathcal V$ onto $\mathcal{V}_{\mathrm{t}}$.
\end{lemma}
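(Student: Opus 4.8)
The plan is to invoke the Riesz representation theorem to rewrite the functional $R$ as an inner product, decompose the Riesz representative along $\mathcal{V}_{\mathrm{t}}$ and its orthogonal complement, and then identify the supremum defining the dual norm via Cauchy--Schwarz.

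First I would recall that by definition $\Vert R\Vert_{\mathcal{V}_{\mathrm{t}}^*} = \sup\{ |R(v)| \colon v\in\mathcal{V}_{\mathrm{t}},\ \Vert v\Vert_{\mathcal{V}} = 1\}$ and that, since $r$ is the Riesz representative of $R$, we have $R(v) = (r,v)_{\mathcal{V}}$ for every $v\in\mathcal{V}$. The key observation is that for $v\in\mathcal{V}_{\mathrm{t}}$ the component of $r$ orthogonal to $\mathcal{V}_{\mathrm{t}}$ does not contribute: writing $r = P_{\mathcal{V}_{\mathrm{t}}} r + (r - P_{\mathcal{V}_{\mathrm{t}}} r)$ with $r - P_{\mathcal{V}_{\mathrm{t}}} r \perp \mathcal{V}_{\mathrm{t}}$, one obtains $R(v) = (P_{\mathcal{V}_{\mathrm{t}}} r, v)_{\mathcal{V}}$ for all $v\in\mathcal{V}_{\mathrm{t}}$.

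Next I would bound the supremum from above using Cauchy--Schwarz, namely $|(P_{\mathcal{V}_{\mathrm{t}}} r, v)_{\mathcal{V}}| \le \Vert P_{\mathcal{V}_{\mathrm{t}}} r\Vert_{\mathcal{V}}$ whenever $\Vert v\Vert_{\mathcal{V}} = 1$, and then show this bound is attained. If $P_{\mathcal{V}_{\mathrm{t}}} r = 0$, both sides of the claimed identity vanish and there is nothing to prove. Otherwise the unit vector $v^* := P_{\mathcal{V}_{\mathrm{t}}} r / \Vert P_{\mathcal{V}_{\mathrm{t}}} r\Vert_{\mathcal{V}}$ lies in $\mathcal{V}_{\mathrm{t}}$ and yields $R(v^*) = \Vert P_{\mathcal{V}_{\mathrm{t}}} r\Vert_{\mathcal{V}}$, so the supremum equals $\Vert P_{\mathcal{V}_{\mathrm{t}}} r\Vert_{\mathcal{V}}$, as claimed.

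There is no serious obstacle here; the argument is essentially a Hilbert-space computation. The only point requiring a little care is that the supremum is genuinely attained, which hinges on $P_{\mathcal{V}_{\mathrm{t}}} r$ itself belonging to $\mathcal{V}_{\mathrm{t}}$ (so that it is an admissible test direction) and on treating the degenerate case $P_{\mathcal{V}_{\mathrm{t}}} r = 0$ separately. Implicit throughout is that $\mathcal{V}_{\mathrm{t}}$ is closed, so that the orthogonal projection $P_{\mathcal{V}_{\mathrm{t}}}$ is well defined; in the application $\mathcal{V}_{\mathrm{t}} = \mathcal{V}_{d_{\mathrm{t}}}$ is finite-dimensional, which guarantees this.
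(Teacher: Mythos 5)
Your proposal is correct and follows essentially the same route as the paper: rewrite the dual norm as a supremum over the unit sphere of $\mathcal{V}_{\mathrm{t}}$, use the Riesz representative to replace $R(v)$ by $(P_{\mathcal{V}_{\mathrm{t}}}r, v)_{\mathcal{V}}$, and observe that the supremum is attained at $v = \Vert P_{\mathcal{V}_{\mathrm{t}}}r\Vert_{\mathcal{V}}^{-1}P_{\mathcal{V}_{\mathrm{t}}}r$. Your extra care in treating the degenerate case $P_{\mathcal{V}_{\mathrm{t}}}r=0$ and in noting that closedness of $\mathcal{V}_{\mathrm{t}}$ is needed for the projection to exist is a small but genuine improvement over the paper's version, which silently assumes both.
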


\begin{proof}
    With the orthogonal projection $P_{\mathcal{V}_{\mathrm{t}}}$ and the dual pairing $\langle \bullet, \bullet\rangle_{\mathcal{V}_{\mathrm{t}}^*,\mathcal{V}_{\mathrm{t}}}$, we have
    \begin{align*}
        \Vert R\Vert_{\mathcal{V}_{\mathrm{t}}^*}
        = \sup_{v \in S(\mathcal{V}_{\mathrm{t}})}\vert\langle R, v\rangle_{\mathcal{V}_{\mathrm{t}}^*,\mathcal{V}_{\mathrm{t}}}\vert 
        = \sup_{v \in S(\mathcal{V}_{\mathrm{t}})}\vert\left(r, v\right)_{\mathcal{V}}\vert 
        = \sup_{v \in S(\mathcal{V}_{\mathrm{t}})}\vert\left(P_{\mathcal{V}_{\mathrm{t}}}r, v\right)_{\mathcal{V}}\vert.
    \end{align*}
    Since the supremum in the last equation is attained for
    $v=\Vert P_{\mathcal{V}_{\mathrm{t}}}r \Vert_{\mathcal{V}}^{-1} P_{\mathcal{V}_{\mathrm{t}}}r \in S(\mathcal{V}_{\mathrm{t}})$, the claim follows.
\end{proof}

As a consequence, it holds that $\Vert \mathcal{R}(v_{\mathrm{a}})\Vert_{\mathcal{V}_{\mathrm{t}}^*} = \Vert \discretized{f} - \discretized{B}\discretized{v}_{\mathrm{a}}\Vert_2$,
where for any $w\in\mathcal{X}$, $\mathcal{R}(w) := (f - B(w),\bullet)_{\mathcal{V}}\in\mathcal{V}^*$ is
the residual of~\eqref{eqn:univariate:ode_var}.
It remains to prove the equivalence of the continuous residual and the energy error, which is achieved with the following theorem.

\begin{theorem} \label{thrm:univariate:res_equiv_err}
    Let $u\in\mathcal{X}$ be the unique solution of~\eqref{eqn:univariate:ode_var},
    let $v_{\mathrm{a}}\in \mathcal{V}_{\mathrm{a}}$ be arbitrary and assume
    that $B(\mathcal{V}_{\mathrm{a}})\subseteq \mathcal{V}_{\mathrm{t}}$.
    Then it holds that
    \begin{align*}
        \Vert \mathcal{R}(v_{\mathrm{a}}) \Vert_{\mathcal{V}^*}
        = \Vert u-v_{\mathrm{a}}\Vert_B
        \quad\text{and}\quad
        \Vert \mathcal{R}(v_{\mathrm{a}})\Vert_{\mathcal{V}_{\mathrm{t}}^*} 
        \leq \Vert u-v_{\mathrm{a}}\Vert_B
        \leq \Vert \mathcal{R}(v_{\mathrm{a}})\Vert_{\mathcal{V}_{\mathrm{t}}^*} + \Vert P_{\mathcal{V}_{\mathrm{t}}^\perp} f \Vert_{\mathcal{V}} .
    \end{align*}
\end{theorem}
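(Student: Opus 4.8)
The plan is to reduce all three estimates to a single observation: since the variational form~\eqref{eqn:univariate:ode_var} holds for every test function in the \emph{full} space $\mathcal{V}=L^2(\mathbb{R},\rho)$, the exact solution satisfies $B(u)=f$ as an identity in $\mathcal{V}$, not merely weakly. Consequently the functional $\mathcal{R}(v_{\mathrm{a}})=(f-B(v_{\mathrm{a}}),\bullet)_{\mathcal{V}}$ has Riesz representative $r=f-B(v_{\mathrm{a}})=B(u)-B(v_{\mathrm{a}})=B(u-v_{\mathrm{a}})$, where the last equality uses linearity of $B$. Once this identification is made, every quantity in the statement is expressible through the single vector $B(u-v_{\mathrm{a}})$ and its orthogonal projections onto $\mathcal{V}_{\mathrm{t}}$ and $\mathcal{V}_{\mathrm{t}}^\perp$, and the argument becomes purely Hilbert-geometric.

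For the first equality I would apply the Riesz isometry (equivalently, Lemma~\ref{lem:univariate:dualNorm} in the special case $\mathcal{V}_{\mathrm{t}}=\mathcal{V}$, where $P_{\mathcal{V}_{\mathrm{t}}}$ is the identity) to obtain $\Vert\mathcal{R}(v_{\mathrm{a}})\Vert_{\mathcal{V}^*}=\Vert r\Vert_{\mathcal{V}}=\Vert B(u-v_{\mathrm{a}})\Vert_{\mathcal{V}}=\Vert u-v_{\mathrm{a}}\Vert_B$, the last step being the definition of the energy norm (which is indeed a norm by Lemmas~\ref{lem:univariate:kerB} and~\ref{lem:univariate:energyNorm}). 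For the lower bound of the second chain I would invoke Lemma~\ref{lem:univariate:dualNorm} to write $\Vert\mathcal{R}(v_{\mathrm{a}})\Vert_{\mathcal{V}_{\mathrm{t}}^*}=\Vert P_{\mathcal{V}_{\mathrm{t}}}r\Vert_{\mathcal{V}}$ and then use that an orthogonal projection is a contraction, so $\Vert P_{\mathcal{V}_{\mathrm{t}}}r\Vert_{\mathcal{V}}\le\Vert r\Vert_{\mathcal{V}}=\Vert u-v_{\mathrm{a}}\Vert_B$.

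For the upper bound I would decompose $r=f-B(v_{\mathrm{a}})$ along the orthogonal splitting $\mathcal{V}=\mathcal{V}_{\mathrm{t}}\oplus\mathcal{V}_{\mathrm{t}}^\perp$ and apply the triangle inequality, giving $\Vert u-v_{\mathrm{a}}\Vert_B=\Vert r\Vert_{\mathcal{V}}\le\Vert P_{\mathcal{V}_{\mathrm{t}}}r\Vert_{\mathcal{V}}+\Vert P_{\mathcal{V}_{\mathrm{t}}^\perp}r\Vert_{\mathcal{V}}$. The first summand is again $\Vert\mathcal{R}(v_{\mathrm{a}})\Vert_{\mathcal{V}_{\mathrm{t}}^*}$ by Lemma~\ref{lem:univariate:dualNorm}. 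The only place the hypothesis $B(\mathcal{V}_{\mathrm{a}})\subseteq\mathcal{V}_{\mathrm{t}}$ is needed is in the second summand: because $B(v_{\mathrm{a}})\in\mathcal{V}_{\mathrm{t}}$ we have $P_{\mathcal{V}_{\mathrm{t}}^\perp}B(v_{\mathrm{a}})=0$, hence $P_{\mathcal{V}_{\mathrm{t}}^\perp}r=P_{\mathcal{V}_{\mathrm{t}}^\perp}f$ and the term collapses to $\Vert P_{\mathcal{V}_{\mathrm{t}}^\perp}f\Vert_{\mathcal{V}}$. I expect the only genuinely delicate point to be the very first step, namely rigorously upgrading the weak identity to $B(u)=f$ in $\mathcal{V}$; this is clean here precisely because the test space in~\eqref{eqn:univariate:ode_var} is all of $\mathcal{V}$, and everything else is a routine application of the Riesz/projection lemma together with the non-expansiveness of orthogonal projections.
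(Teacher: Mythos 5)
Your proposal is correct and follows essentially the same route as the paper: identify the Riesz representative $r=f-B(v_{\mathrm{a}})=B(u-v_{\mathrm{a}})$, use Lemma~\ref{lem:univariate:dualNorm} together with the contractivity of $P_{\mathcal{V}_{\mathrm{t}}}$ for the lower bound, and use $B(v_{\mathrm{a}})\in\mathcal{V}_{\mathrm{t}}$ to replace $P_{\mathcal{V}_{\mathrm{t}}^\perp}r$ by $P_{\mathcal{V}_{\mathrm{t}}^\perp}f$ in the upper bound. Your triangle inequality on the orthogonal splitting of $r$ is just the dual formulation of the paper's splitting of the supremum over $S(\mathcal{V})$, so the two arguments coincide.
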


\begin{proof}
    The first assertion follows from the definition of the energy norm and the residual, i.e.
    \begin{align*}
        \Vert \mathcal{R}(v_{\mathrm{a}}) \Vert_{\mathcal{V}^*}
        = \Vert r \Vert_{\mathcal{V}}
        = \Vert B(u - v_{\mathrm{a}}) \Vert_{\mathcal{V}}
        = \Vert u-v_{\mathrm{a}}\Vert_B,
    \end{align*}
    where $r = f - B(v_{\mathrm{a}})$ is the Riesz representative of the residual $\mathcal{R}(v_{\mathrm{a}})$ in $\mathcal{V}$.
    To show the first inequality of the second assertion, note that Lemma~\ref{lem:univariate:dualNorm} directly yields
    \begin{equation*}
        \Vert \mathcal{R}(v_{\mathrm{a}})\Vert_{\mathcal{V}_{\mathrm{t}}^*}
        = \Vert P_{\mathcal{V}_{\mathrm{t}}}r\Vert_{\mathcal{V}}
        = \Vert P_{\mathcal{V}_{\mathrm{t}}}B(u - v_{\mathrm{a}})\Vert_{\mathcal{V}}
        \le \Vert B(u - v_{\mathrm{a}})\Vert_{\mathcal{V}}
        = \Vert u - v_{\mathrm{a}}\Vert_{B} .
    \end{equation*}
    The second inequality holds since $B(v_{\mathrm{a}}) \in\mathcal{V}_{\mathrm{t}}$, which implies
    $(r, P_{\mathcal{V}_{\mathrm{t}}^\perp} v)_{\mathcal{V}} = (f, P_{\mathcal{V}_{\mathrm{t}}^\perp} v)_{\mathcal{V}}$ and thus
    \begin{equation*}
        \Vert \mathcal{R}(v_{\mathrm{a}}) \Vert_{\mathcal{V}^*}
        \leq \sup_{v\in S(\mathcal{V})} \vert \left( r, P_{\mathcal{V}_{\mathrm{t}}}v\right)_{\mathcal{V}} \vert + \sup_{v\in S(\mathcal{V})} \vert ( r, P_{\mathcal{V}_{\mathrm{t}}^\perp} v)_{\mathcal{V}} \vert
        =  \Vert \mathcal{R}(v_{\mathrm{a}}) \Vert_{\mathcal{V}_{\mathrm{t}}^*} + \Vert  P_{\mathcal{V}_{\mathrm{t}}^\perp}f \Vert_{\mathcal{V}} .\qedhere
    \end{equation*}
\end{proof}

\begin{remark}%
    Theorem~\ref{thrm:univariate:res_equiv_err} is similar to well known results for  a posteriori error control in the context of elliptic PDEs~\cite{PR94,CDG14,Bra07} in the sense that estimating the residual in the dual norm of the discrete space $\mathcal{V}_{\mathrm{t}}$ introduces an additional data oscillation term.
    To guarantee the efficiency of the residual estimator, the right-hand side $f$ hence has to be resolved adequately.
    However, even if the data oscillation fails to be of higher order,
    the bound is always strictly efficient in the sense that
    \begin{align*}
        \Vert P_{\mathcal{V}_{\mathrm{t}}^\perp}f\Vert_{\mathcal{V}}
        = \Vert P_{\mathcal{V}_{\mathrm{t}}^\perp}B(u - v_{\mathrm{a}}) \Vert_{\mathcal{V}}
        \le \Vert u-v_{\mathrm{a}} \Vert_{B}. \qedhere
    \end{align*}
\end{remark}

Note that in our setting the right-hand side $f$ can be chosen rather freely.
Hence, without loss of generality we may assume that the data oscillation term can be neglected in applications.
Indeed, for the choice $f(y) = \exp h(y_0) h'(y)$ it holds that $f\in\mathcal{V}_{\mathrm{t}}$ and thus $\Vert P_{\mathcal{V}_{\mathrm{t}}^\perp}f\Vert_{\mathcal{V}}=0$.\\[1ex]

The following is an observation on how certain properties of the exponent $h$, such as regularity, influence the boundedness of the energy error with respect to other more meaningful or practical norms.

\begin{corollary}%
    \label{cor:univariate:boundedness_of_error}
    Let $u\in\mathcal{X}$ be the solution of~\eqref{eqn:univariate:ode_var}
    and let $h\in W^{1,\infty}(\mathbb{R})$.
    Then there exists $C>0$ such that
    \begin{align*}
        \Vert u-v_\mathrm{a} \Vert_B
        \leq C \Vert u-v_{\mathrm{a}} \Vert_{H^1(\mathbb{R},\rho)}
        \qquad\mbox{for all }v_a\in\mathcal{V}_{\mathrm{a}}.
    \end{align*}
    If $\rho$ is standard Gaussian and there exists $\varepsilon>0$, such
    that either $h'(y) \leq \frac{y}{2} + \varepsilon$ or
    $\frac{y}{2} + \varepsilon \leq h'(y)$ for all $y\in\mathbb{R}$,
    then for $c=\frac{1}{\varepsilon}$ it additionally holds that
    \begin{align*}
        \Vert u-v_a \Vert_{L^2(\mathbb{R},\rho)}
        \leq c\Vert u-v_\mathrm{a} \Vert_B
        \qquad\mbox{for all }v_a\in\mathcal{V}_{\mathrm{a}}.
    \end{align*}
\end{corollary}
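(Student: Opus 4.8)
The plan is to establish the two inequalities independently, abbreviating $w := u - v_{\mathrm a}$ and recalling that $\Vert w\Vert_B = \Vert B(w)\Vert_{\mathcal V} = \Vert w' - w h'\Vert_{L^2(\mathbb R,\rho)}$. The upper bound is routine and follows from the triangle inequality together with the essential boundedness of $h'$ guaranteed by $h\in W^{1,\infty}(\mathbb R)$. Concretely,
\begin{align*}
\Vert w\Vert_B
= \Vert w' - w h'\Vert_{L^2(\mathbb R,\rho)}
\le \Vert w'\Vert_{L^2(\mathbb R,\rho)} + \Vert h'\Vert_{L^\infty(\mathbb R)}\Vert w\Vert_{L^2(\mathbb R,\rho)}
\le \bigl(1+\Vert h'\Vert_{L^\infty(\mathbb R)}\bigr)\Vert w\Vert_{H^1(\mathbb R,\rho)},
\end{align*}
where each of $\Vert w'\Vert$ and $\Vert w\Vert$ is bounded by $\Vert w\Vert_{H^1(\mathbb R,\rho)}$, so the first claim holds with $C = 1+\Vert h'\Vert_{L^\infty(\mathbb R)}$.

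For the lower bound the key idea is a multiplier argument that exploits the Gaussian identity $\rho'(y) = -y\,\rho(y)$. I would test $B(w)$ against $a\,w$ in $\mathcal V$, where $a\in\{+1,-1\}$ is a sign fixed below, and integrate the term $w'w = \tfrac12(w^2)'$ by parts. Assuming the boundary contribution vanishes, this yields
\begin{align*}
\bigl(B(w), a\,w\bigr)_{\mathcal V}
= a\int_{\mathbb R}\bigl(w'w - w^2 h'\bigr)\rho\,\mathrm{d}y
= a\int_{\mathbb R} w^2\Bigl(\tfrac{y}{2} - h'\Bigr)\rho\,\mathrm{d}y,
\end{align*}
the factor $\tfrac{y}{2}$ being produced precisely by moving the derivative off $(w^2)'$ onto the Gaussian weight. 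The sign hypothesis is exactly what makes $a\bigl(\tfrac y2 - h'\bigr)\ge\varepsilon$ pointwise for a suitable global choice of $a$ (namely $a=+1$ when $\tfrac y2 - h'\ge\varepsilon$ and $a=-1$ when $h'-\tfrac y2\ge\varepsilon$). The right-hand side is then bounded below by $\varepsilon\Vert w\Vert_{L^2(\mathbb R,\rho)}^2$, while Cauchy--Schwarz bounds the left-hand side above by $\Vert B(w)\Vert_{\mathcal V}\Vert w\Vert_{L^2(\mathbb R,\rho)}$ (using $\Vert a w\Vert_{\mathcal V} = \Vert w\Vert_{\mathcal V}$). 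Dividing by $\Vert w\Vert_{L^2(\mathbb R,\rho)}$ gives $\varepsilon\Vert w\Vert_{L^2(\mathbb R,\rho)}\le\Vert w\Vert_B$, i.e.\ the claim with $c=1/\varepsilon$.

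I expect the main obstacle to be twofold. First, the validity of the integration by parts: I must ensure that the boundary term $\bigl[a\,w^2\rho\bigr]_{-\infty}^{+\infty}$ vanishes and that all integrals converge. Here I would invoke the regularity built into the problem, namely $u\in H^2(\mathbb R,\rho)$ and $v_{\mathrm a}$ polynomial, so that $w\in H^1(\mathbb R,\rho)$, which for the Gaussian weight is precisely the condition guaranteeing $w^2\rho\to0$ at $\pm\infty$ and legitimising integration by parts against $\rho$. Second, the bookkeeping of the sign: since $y\mapsto\tfrac y2 - h'(y)$ is continuous, a lower bound $\varepsilon$ on its modulus forces a single global sign, and it is this observation that lets a constant multiplier $a=\pm1$ suffice and hence produces the sharp constant $c=1/\varepsilon$ rather than a larger one.
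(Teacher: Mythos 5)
Your proposal is correct and follows essentially the same route as the paper: the upper bound via the triangle inequality and $\Vert h'\Vert_{L^\infty}$, and the lower bound via testing $B(w)$ against $w$, integrating by parts against the Gaussian weight to produce $\bigl(\tfrac{y}{2}-h',w^2\bigr)_{L^2(\mathbb{R},\rho)}$, and bounding below by $\varepsilon\Vert w\Vert_{L^2(\mathbb{R},\rho)}^2$ using the sign condition (the paper phrases the final step as a lower bound on $\Vert\mathcal{R}(v_{\mathrm{a}})\Vert_{\mathcal{V}^*}$ combined with Theorem~\ref{thrm:univariate:res_equiv_err}, which is your Cauchy--Schwarz step in disguise). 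Your explicit sign multiplier $a=\pm1$ and the remark on the vanishing boundary term make precise two points the paper leaves implicit.
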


\begin{proof}%
    The upper bound follows directly from the definition of the $H^1$ norm and
    the essential boundedness of $h'$ for
    $C=\sqrt{2}\max\{1, \Vert h'\Vert_{L^\infty(\mathbb{R})}\}$.
    To show the lower bound, let $\hat{c}(y) = \frac{y}{2}-h'(y)$.
    From the assumptions on $h'$ it follows that $\vert \hat{c}(y) \vert \geq \varepsilon$ for all $y\in\mathbb{R}$.
    Integrating by parts yields
    \begin{align*}
      \left( v', v \right)_{L^2(\mathbb{R},\rho)}
      = \left( \rho, v' v \right)
      = \frac{1}{2} \left( \rho, (v^2)' \right)
      = - \frac{1}{2} \left( \rho', v^2 \right)
      = \frac{1}{2} \left( y, v^2 \right)_{L^2(\mathbb{R},\rho)}.
    \end{align*}
    In combination with the boundedness of $\hat{c}$ this implies
    \begin{align*}
        \vert\left(B(v), v \right)_{L^2(\mathbb{R},\rho)}\vert
        = \vert\left(\hat{c}(y), v^2 \right)_{L^2(\mathbb{R},\rho)}\vert
        \geq \varepsilon \Vert v \Vert_{L^2(\mathbb{R},\rho)}^2.
    \end{align*}
    Since $\mathcal{V} = L^2(\mathbb{R},\rho)$, the residual can be bounded from below.
    For any $v\in\mathcal{X}$ it holds that
    \begin{align*}
      \Vert \mathcal{R}(v) \Vert_{\mathcal{V}^*}
      &= \sup_{w\in \mathcal{V}\setminus\{0\}} \frac{\vert \left( B(u-v), w \right)_{\mathcal{V}}\vert}{\Vert w\Vert_{\mathcal{V}}}
      \geq \frac{\vert \left( B(u-v), u-v \right)_{\mathcal{V}}\vert}{\Vert u-v\Vert_{\mathcal{V}}}\\
      &\geq \varepsilon \Vert u-v \Vert_{\mathcal{V}}.
    \end{align*}
    Theorem~\ref{thrm:univariate:res_equiv_err} then concludes the proof with
    $\Vert u-v_a \Vert_{L^2(\mathbb{R},\rho)} \leq \varepsilon^{-1}\Vert \mathcal{R}(v_{\mathrm{a}})\Vert_{\mathcal{V}^*} = \varepsilon^{-1}\Vert u-v_{\mathrm{a}}\Vert_B$
    for all $v_{\mathrm{a}}\in \mathcal{V}_{\mathrm{a}}$.
\end{proof}

\begin{remark}%
    The choice of the dynamical system for the approximation of $u$ is not
    unique and it determines the induced energy norm.
    A different choice of dynamical system may thus lead to more reasonable assumptions on the exponent $h$ then suggested by Corollary~\ref{cor:univariate:boundedness_of_error} to obtain bounds of the energy error by different norms.
    
    To illustrate this, consider for $x\in D=(0,1)$ the second order ODE
    \begin{align*}
        u'' &= (h'' + (h')^2)\, u 
        \qquad\mbox{in }D,\\
        u(y) &= \exp h(y)
        \qquad\qquad\mbox{on }\partial D.
    \end{align*}
    Homogenization and standard arguments for elliptic PDEs yield $\Vert u-v_a \Vert_B \approx \Vert u-v_a \Vert_{H_0^1}$ for all $v_a\in\mathcal{V}_{\mathrm{a}}\subset\mathcal{X}=H_0^1(D)$ if there exist $0 < \check{h} \leq \hat{h} < \infty$ such that $\check{h} \leq (h'' + (h')^2) \leq \hat{h}$.
    This is the case for many affine and quadratic exponents.
\end{remark}

\subsection{Generalization to multivariate exponentials}%
\label{sec:multivariate:operator}

In the multivariate setting we assume that for some $M,d_h\in\mathbb{N}$ the
exponent $h$ is given by an expansion
\begin{equation} \label{eqn:multivariate:h_full}
    h(y) = \sum_{\mu\in [d_{\mathrm{h}}]^M} \discretized{h}[\mu] P_{\mu}(y),
    \quad\text{where}\quad
    P_{\mu}(y) = \prod_{m=1}^M p_{\mu_m}(y_m) .
\end{equation}
Here, the orthonormal basis $\{P_\mu\}_{\mu\in\mathbb{N}_0^M}$ of the space $L^2(\mathbb{R}^{M},\varrho)$ for $\varrho(y)=\prod_{m=1}^M \rho(y_m)$ is chosen as the tensorization of the univariate orthonormal basis $\{p_j\}_{j\in\mathbb{N}_0}$ of $L^2(\mathbb{R};\rho)$.

The aim of this section is to generalize the univariate results from Section~\ref{sec:univariate:error} to the multivariate setting by considering a \emph{gradient system} of differential equations.

\begin{definition}[Gradient system]%
    \label{def:gradient_system}
    A system of first order linear differential equations of the form
    \begin{align*}
        \mbox{Find }u\in C^1(\mathbb{R}^M)\mbox{ such that}\quad
        \nabla u(y) = A(y) u(y) + F(y),
    \end{align*}
    with $A, F\in C(\mathbb{R}^M;\mathbb{R}^M)$ is called a \emph{gradient system} with $M$ \emph{component equations}
    \begin{align*}
        \partial_m u(y) = A_m(y) u(y) + F_m(y).
    \end{align*}
\end{definition}

For $M=2$, the simple example $A \equiv 0$ and $F(y_1, y_2) = (y_2\ \,0)^\intercal$ shows that an arbitrary gradient system may not have a solution.
However, the following theorem guarantees that the existence of a solution to a gradient system implies its uniqueness under suitable assumptions.

\begin{theorem}%
    \label{thrm:univariate:uniquenessGradientSystem}
    Consider the gradient system $\nabla u(y) = A(y) u(y) + F(y)$ on the convex domain $\Omega$ with initial condition $u(y^0) = u^0$ and assume that 
    $\sup_{y\in\tilde{\Omega}} \Vert A(y) \Vert < \infty$ for any closed subset $\tilde{\Omega}\subseteq\Omega$.
    Then, if there exists a classical solution $u$ to the gradient system, it is unique.
\end{theorem}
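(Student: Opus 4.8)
The plan is to prove uniqueness by taking the difference of two hypothetical solutions and reducing the gradient system to a one-parameter family of scalar linear ODEs along the line segments emanating from the initial point $y^0$, where \emph{convexity} of $\Omega$ is exactly what makes this reduction legitimate.

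First I would suppose that $u_1, u_2 \in C^1(\Omega)$ are both classical solutions satisfying $u_i(y^0) = u^0$, and set $w := u_1 - u_2$. By linearity of the gradient system the inhomogeneity $F$ cancels, so $w$ solves the homogeneous system $\nabla w(y) = A(y) w(y)$ with $w(y^0) = 0$. Note that $u$ is scalar-valued while $A, F$ take values in $\mathbb{R}^M$, so this is a genuine vector identity between $\nabla w$ and $A w$. It then suffices to show $w \equiv 0$ on $\Omega$.

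Next I would fix an arbitrary $y \in \Omega$ and consider the segment $\gamma(t) := y^0 + t(y - y^0)$ for $t \in [0,1]$, which lies entirely in $\Omega$ by convexity. Defining $g(t) := w(\gamma(t))$, the fact that $w \in C^1$ and $\gamma$ is affine gives $g \in C^1([0,1])$, and the chain rule yields $g'(t) = \nabla w(\gamma(t)) \cdot (y - y^0)$. Substituting the homogeneous system turns this into $g'(t) = \bigl[A(\gamma(t)) \cdot (y - y^0)\bigr]\, w(\gamma(t)) = a(t) g(t)$ with the scalar coefficient $a(t) := A(\gamma(t)) \cdot (y - y^0)$. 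Thus $g$ satisfies the scalar linear initial value problem $g' = a g$ with $g(0) = w(y^0) = 0$.

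To conclude $g \equiv 0$, hence $g(1) = w(y) = 0$ and, $y$ being arbitrary, $w \equiv 0$, is where the hypothesis $\sup_{\tilde{\Omega}} \Vert A \Vert < \infty$ for closed $\tilde{\Omega} \subseteq \Omega$ enters: the trace $\gamma([0,1])$ is a compact subset of $\Omega$, so $a$ is bounded (indeed continuous) on $[0,1]$, and the unique solution of the scalar problem is $g(t) = g(0)\exp\bigl(\int_0^t a(s)\,\mathrm{d}s\bigr) = 0$, e.g.\ by Gronwall's inequality. The one point to treat with care is precisely this reduction to one dimension: convexity guarantees the segment never leaves $\Omega$, and the local boundedness of $A$ guarantees the resulting coefficient $a$ is integrable so that standard scalar uniqueness applies. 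It is worth stressing that no compatibility (curl-type) condition on $A$ is invoked anywhere, since we only use the standing assumption that a classical solution exists and argue about the difference of two such solutions.
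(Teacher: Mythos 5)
Your proof is correct, but it takes a genuinely different route from the paper's. The paper keeps both solutions $u_1,u_2$ and the inhomogeneity $F$, and connects $y^0$ to $y$ by a \emph{staircase} of axis-parallel segments, changing one coordinate at a time; along each leg the $m$-th component equation becomes a one-dimensional linear ODE, and Picard--Lindel\"of gives uniqueness leg by leg, so $u_1$ and $u_2$ agree at the end of each leg and hence at $y$. You instead pass to the difference $w=u_1-u_2$ (so $F$ cancels), restrict to the \emph{straight} segment $\gamma(t)=y^0+t(y-y^0)$, and contract the full gradient $\nabla w=Aw$ against the direction $y-y^0$ to get a single scalar homogeneous ODE $g'=ag$, $g(0)=0$, killed by Gronwall. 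Two remarks on what each buys. The paper's component-wise iteration is the version that generalizes to Proposition~\ref{thrm:multivariate:holonomic} (where one genuinely needs the ODE along a curve $\xi$ with polynomial data), whereas your argument is shorter and more self-contained. More importantly, your path actually uses the stated hypothesis: the straight segment lies in $\Omega$ precisely by convexity, while the staircase path of the paper's proof need not stay inside a general convex domain (e.g.\ a disk), so the paper's argument as written really only covers $\Omega=\mathbb{R}^M$ or box-shaped domains. In that sense your proof is the more faithful one to the theorem as stated. All the individual steps you flag --- continuity of $a$ from $A\in C(\mathbb{R}^M;\mathbb{R}^M)$, boundedness on the compact trace $\gamma([0,1])$, and the absence of any curl-type compatibility condition on $A$ (existence is assumed, only uniqueness is proved) --- check out.
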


\begin{proof}
    To prove uniqueness, let $u_1$ and $u_2$ be two solutions to the gradient system. %
    For an arbitrary $y\in\mathbb{R}^M$ and $m=0,\ldots,M$ define $y^m[t] := (y_1,\ldots,y_{m-1},t,y^0_{m+1},\ldots,y^0_M)$ and $y^m := y^m[y_m]$ and note that $y^{m+1}[y^0_{m+1}] = y^m$ and $y^M = y$.
    Now consider the ordinary differential equation $\partial_{t}u(y^1[t]) = \partial_m u(y^1[t]) = A_1(y^1[t])u(y^1[t]) + F_1(y^1[t])$ with initial condition $u(y^1[y^0_1]) = u(y^0) = u^0$.
    The assumption that $\sup_{y\in\tilde{\Omega}} \Vert A(y) \Vert < \infty$ for any closed subset $\tilde{\Omega}\subseteq\Omega$ guarantees that the conditions of the Picard--Lindel\"of theorem are satisfied and the solution is (globally) unique.
    Since both $u_1$ and $u_2$ satisfy the equation, it follows that $u_1(y^1[t]) = u_2(y^1[t])$ for all $t\in\mathbb{R}$ and in particular $u^1 := u_1(y^1) = u_2(y^1)$.
    This argument can be iterated.
    By considering the equation $\partial_{t}u(y^{m+1}[t]) = A_{m+1}(y^{m+1}[t])u(y^{m+1}[t]) + F_{m+1}(y^{m+1}[t])$ with initial condition $u(y^{m+1}[y^0_m]) = u^m$
    it follows that $u^{m+1} := u_1(y^{m+1}) = u_2(y^{m+1})$.
    Finally, $u_1(y) = u_1(y^M) = u^M = u_2(y^M) = u_2(y)$.
    This implies $u_1\equiv u_2$, since $y\in\mathbb{R}^M$ was arbitrary.
\end{proof}

We now define the multivariate formulation of~\eqref{eqn:univariate:ode} as
\begin{equation} \label{eqn:multivariate:ode}
    \begin{aligned}
        \nabla u - u\nabla h &= f, \\
        u(y_0) &= 0,
    \end{aligned}
\end{equation}
where we choose $f(y) = \exp (h(y_{0})) \nabla h(y)$ and set $y_0=0\in\mathbb{R}^M$ as before.
Observe that $u(y) = \exp h(y)$ is a classical solution of~\eqref{eqn:multivariate:ode} and since the gradient system satisfies the conditions of Theorem~\ref{thrm:univariate:uniquenessGradientSystem}, this solution is unique.
For $k=\left\lceil\frac{M}{2}\right\rceil+1$, let $\mathcal{X} = \{u \in H^{k}(\mathbb{R}^M,\varrho) \colon u(y_0) = 0\}$ and $\mathcal{V} = L^2(\mathbb{R}^M, \varrho)^M$ and define the operator $B:\mathcal{X}\to\mathcal{V}$ by $B(v) = \nabla v - v \nabla h$.
Here, $\left\lceil x \right\rceil = \min\{n\in\mathbb{Z}\colon n \geq x\}$ denotes the ceiling function.
The variational form of this equation then reads: Find $u\in\mathcal{X}$ such that
\begin{align}
  \label{eqn:multivariate:ode_var}
  (B(u)_m, v )_{L^2(\mathbb{R}^M,\varrho)} &~= (f_m, v)_{L^2(\mathbb{R}^M,\varrho)}
  \qquad\text{for all }m\in[M]\text{ and }v\in L^2(\mathbb{R}^M,\varrho).
\end{align}
To formulate the Galerkin approximation of equation~\eqref{eqn:multivariate:ode_var}, we define the ansatz space $\mathcal{V}_{\mathrm{a}}$ by
\begin{equation*}
    \mathcal{V}_{\mathrm{a}}
    := \operatorname{span}\{P_\mu \,:\, \mu\in[d_{\mathrm{a}}]^M\setminus\{0\}\},
\end{equation*}
and set $\mathcal{V}_{d_t}$ as test space.
Note that $\mathcal{V}_{d_t}$ is the test space for the $m$\textsuperscript{th} component of the equation.
The test space for the complete operator $B$ is given by the Cartesian product
$\mathcal{V}_{\mathrm{t}} := \mathcal{V}_{d_t}^M$.
As above, we denote the discretized versions of $u$ by $\discretized{u}$ and define the discretization of $B$ and $f$ as
\begin{align*} %
    \discretized{B} = \begin{pmatrix} 
        \discretized{B}_1 &
        \dots &
        \discretized{B}_M
    \end{pmatrix}^\intercal
    \qquad\text{and}\qquad
    \discretized{f} = \begin{pmatrix} 
        \discretized{f}_1 &
        \dots &
        \discretized{f}_M
    \end{pmatrix}^\intercal
\end{align*}
for
\begin{align*} %
    \discretized{B}_{m}[{\mu,\nu}] = (B(P_\nu)_m, P_\mu)_{L^2(\mathbb{R}^M,\varrho)}
    \qquad\text{and}\qquad
    \discretized{f}_{m}[{\mu}] = (f_m, P_\mu)_{L^2(\mathbb{R}^M,\varrho)},
\end{align*}
where $\mu\in[d_{\mathrm{t}}]^M$ and $\nu\in[d_{\mathrm{a}}]^M\setminus\{0\}$.

\begin{remark}\label{rmk:multivariate:injectivity}
    By the Sobolev inequality, we have for any bounded open subset $U$ of $\mathbb{R}^M$ with $C^1$ boundary that $H^k(U)\hookrightarrow C^{1}(U)$ for $k\ge\left\lceil \frac{M}{2}\right\rceil+1$ and $Z_U := \inf_{y\in U} \varrho(y) > 0$.
    Hence, for any $f\in H^k(\mathbb{R}^M, \varrho)$,
    \begin{equation*}
        \Vert f\Vert_{H^k(U)}^2
        = \sum_{|\mu|\le k} \int_U |f^{(\mu)}(y)|^2 \,\mathrm{d}y 
        \le \sum_{|\mu|\le k} \int_U |f^{(\mu)}(y)|^2 Z_U^{-1} \varrho(y)\,\mathrm{d}y
        = Z_U^{-1} \Vert f\Vert_{H^k(U,\varrho)}^2 .
    \end{equation*}
    Thus, $f\in H^k(U)$ and consequently $f\in C^{1}(U)$
    for any $U$ in a countable covering of $\mathbb{R}^M$ by open sets with $C^1$ boundary.
    This shows that $f\in C^{1}(\mathbb{R}^M)$
    and hence every function in $\mathcal{X}$ is differentiable in the classical sense.
    This means that the weak solution of~\eqref{eqn:multivariate:ode_var} coincides with the classical solution and that $B$ is injective.
    Choosing $d_t \geq d_a + d_h -1$ ensures  $B(u_{\mathrm{a}})\in\mathcal{V}_{\mathrm{t}}$ for all $u_{\mathrm{a}}\in\mathcal{V}_{\mathrm{a}}$.
    Then $\discretized{B}$ is the matrix representation of the restriction of $B$ onto $\mathcal{V}_{\mathrm{a}}$ and thus injective.
\end{remark}

Remark~\ref{rmk:multivariate:injectivity} guarantees that the energy norm is well-defined and implies that $\Vert \mathcal{R}(v_{\mathrm{a}})\Vert_{\mathcal{V}_{\mathrm{t}}^*} = \Vert \discretized{f} - \discretized{B}\discretized{v}_{\mathrm{a}}\Vert_2$ where the residual is again defined by $\mathcal{R}(v_{\mathrm{a}}) := (f-B(v_{\mathrm{a}}), \bullet)_{\mathcal{V}}$.
To show equivalence of the residual to the energy norm and to obtain a posteriori error control note that Lemma~\ref{lem:univariate:dualNorm} and Theorem~\ref{thrm:univariate:res_equiv_err} also hold for the multivariate case.

To conclude this section we note that it is possible to generalize Theorem~\ref{thrm:univariate:uniquenessGradientSystem} to a larger set of what we refer to as multivariate holonomic functions, which is shown in the following proposition.

\begin{proposition} \label{thrm:multivariate:holonomic}
    Let $w$ be a holonomic function and $h$ be a polynomial.
    Then $w\circ h$ is the unique solution to a gradient system.
\end{proposition}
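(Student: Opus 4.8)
The plan is to exhibit an explicit (vector-valued) gradient system one of whose components is $w\circ h$, and then to obtain uniqueness from Theorem~\ref{thrm:univariate:uniquenessGradientSystem}. Since $w$ is holonomic, it satisfies a linear ODE of some order $k$ with polynomial coefficients, say $p_k(z)\,w^{(k)}+\dots+p_0(z)\,w=0$ with $p_k\not\equiv 0$. First I would collect the lower derivatives into $g:=(w,w',\dots,w^{(k-1)})^\intercal$. On the open set $\Omega_0:=\{z:p_k(z)\neq0\}$ the defining relation reads $w^{(k)}=-p_k^{-1}\sum_{j=0}^{k-1}p_j\,w^{(j)}$, so $g$ satisfies $g'=M\,g$ with the companion-type matrix
\[
  M(z)=\begin{pmatrix} 0 & 1 & & \\ & \ddots & \ddots & \\ & & 0 & 1 \\ -\tfrac{p_0(z)}{p_k(z)} & \cdots & & -\tfrac{p_{k-1}(z)}{p_k(z)} \end{pmatrix}.
\]

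Next I would compose with $h$. Setting $U_j(y):=w^{(j)}(h(y))$ for $j\in[k]$ and $U:=(U_0,\dots,U_{k-1})^\intercal$, the chain rule gives $\partial_m U_j=w^{(j+1)}(h)\,\partial_m h=U_{j+1}\,\partial_m h$ for $j<k-1$, while evaluating the holonomic relation at $z=h(y)$ yields $\partial_m U_{k-1}=w^{(k)}(h)\,\partial_m h=-\tfrac{\partial_m h}{p_k(h)}\sum_{j=0}^{k-1}p_j(h)\,U_j$. Collecting these identities shows that $U$ solves the homogeneous gradient system $\partial_m U=A_m U$ on $\Omega:=h^{-1}(\Omega_0)=\{y:p_k(h(y))\neq0\}$, with $A_m(y)=\partial_m h(y)\,M(h(y))$ and $F\equiv 0$; by construction its first component $U_0$ equals $w\circ h$, which establishes existence. (If $w$ is itself multivariate holonomic, the identical argument applies after replacing $g$ by a finite $\mathbb{R}(z)$-basis $g_1,\dots,g_n$ of the derivative module of $w$, with $\partial_{z_l}g_i=\sum_j a^{(l)}_{ij}g_j$; the multivariate chain rule then gives $(A_m)_{ij}(y)=\sum_l a^{(l)}_{ij}(h(y))\,\partial_m h_l(y)$, which only enlarges the matrices $A_m$. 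The scalar case $w=\exp$, where $k=1$ and $A_m=\partial_m h$, recovers exactly $\nabla u=u\nabla h$ as in~\eqref{eqn:multivariate:ode}.)

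For uniqueness I would appeal to Theorem~\ref{thrm:univariate:uniquenessGradientSystem}. The matrix $A_m(y)=\partial_m h(y)\,M(h(y))$ is rational in $y$, with singularities confined to the zero set of $p_k\circ h$, and is therefore continuous and bounded on every compact subset of any convex subdomain $\tilde\Omega\subseteq\Omega$ on which $p_k\circ h$ stays bounded away from zero. On such a domain the hypotheses of Theorem~\ref{thrm:univariate:uniquenessGradientSystem} are met, so the classical solution with prescribed initial value $U(y^0)=\bigl(w^{(j)}(h(y^0))\bigr)_{j\in[k]}$ is unique, and its first component is $w\circ h$.

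I expect the main obstacle to be the rational coefficients $p_j/p_k$: they prevent the gradient system from having globally continuous coefficients and so obstruct a single global uniqueness statement. One must either restrict to a convex region avoiding the zero locus of the leading coefficient $p_k\circ h$, or clear denominators and argue locally before patching the line-by-line uniqueness of Theorem~\ref{thrm:univariate:uniquenessGradientSystem} together. A secondary point to record is that the constructed system is vector-valued, so Definition~\ref{def:gradient_system} is invoked with the unknown $U$ ranging in $\mathbb{R}^k$ and the $A_m$ matrix-valued; the assertion of the proposition then concerns the distinguished first component $U_0=w\circ h$.
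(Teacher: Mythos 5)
Your construction is sound and arrives at essentially the same gradient system as the paper, but the two arguments diverge in how they treat the leading coefficient and in the uniqueness mechanism. The paper does not pass to companion form: it writes the holonomic relation as an implicit first-order system $A(t)v'(t)+B(t)v(t)=0$ with \emph{polynomial} matrices $A,B$, so that after composing with $h$ the system $A(h(y))\partial_m V(y) + \partial_m h(y)B(h(y))V(y)=0$ has globally polynomial coefficients and no excluded set appears. For uniqueness it does not invoke Theorem~\ref{thrm:univariate:uniquenessGradientSystem} at all; instead it restricts $V$ to the straight line $\xi_y(t)=y_0+(y-y_0)t$, contracts the component equations against $\xi_y'$, and observes that $V\circ\xi_y$ solves a univariate first-order linear ODE with polynomial coefficients, so $V(y)=[V\circ\xi_y](1)$ is pinned down by $V(y_0)$. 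Your route --- dividing by $p_k$, working on $\Omega=h^{-1}(\{p_k\neq0\})$, and applying the coordinate-wise uniqueness theorem on convex subdomains --- buys an explicit system to which the existing theorem applies almost verbatim (modulo the vector-valued extension of Definition~\ref{def:gradient_system} that you correctly flag), at the cost of the singular locus you identify. It is worth noting, though, that the obstacle you name does not actually disappear in the paper's version: its appeal to unique solvability of the implicit univariate equation $[A\circ h\circ\xi][V\circ\xi]'+[h\circ\xi]'[B\circ h\circ\xi][V\circ\xi]=0$ tacitly requires the leading matrix to be invertible along the line, which is the condition $p_k(h(\xi_y(t)))\neq0$ in disguise; your explicit restriction is, if anything, the more honest account. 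Both arguments are unproblematic for the exponential, where the leading coefficient is constant.
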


\begin{proof}
    Recall that a holonomic function $w$ of order $r$ is the solution to an $r$-th order homogeneous linear differential equation with polynomial coefficients.
    This means that there exist matrices $A(t),B(t)\in\mathbb{R}^{r\times r}$ such that
    \begin{align*}
        A(t)v'(t) + B(t)v(t) = 0
    \end{align*}
    and $w=v_1$.
    Let $\partial_m f(y)$ denote the partial derivative of the function $f$ with respect to $y_m$ in $y$ and define $V := v\circ h$.
    Then $\partial_m V(y) = \partial_m h(y)v'(h(y))$ for any $y\in\mathbb{R}^M$.
    This means that $V$ is the solution to the system of ordinary differential equations
    \begin{equation*}
        A(h(y))\partial_m V(y) + \partial_m h(y)B(h(y))V(y) = 0 .
    \end{equation*}
    To show uniqueness, let $\xi\in C^1(\mathbb{R};\mathbb{R}^M)$ and observe that the preceding system of equations implies
    \begin{equation*}
        A(h(\xi(t)))\partial_m V(\xi(t))\xi_m'(t) + \partial_m h(\xi(t))B(h(\xi(t)))V(\xi(t))\xi_m'(t) = 0
    \end{equation*}
    for all $m=1,\ldots,M$.
    Summing over $m$, the equation can be reformulated equivalently as
    \begin{equation*}
        A(h(\xi(t)))(V(\xi(t)))' + (h(\xi(t)))'B(h(\xi(t)))V(\xi(t)) = 0
    \end{equation*}
    or in shorter notation as $[A\circ h\circ \xi][V\circ\xi]' + [h\circ\xi]'[B\circ h\circ\xi][V\circ\xi] = 0$.
    This is a first order homogeneous linear differential equation for the function $V\circ\xi = v\circ [h\circ\xi]$.
    If $h$ is a polynomial and if $\xi(t) = \xi_y(t) := y_0 + (y-y_0)t$, this is a first order homogeneous linear differential equation with polynomial coefficients and exhibits a unique solution.
    Thus, if we are given an initial condition $V(y_0) = V_0$ and we set the initial condition $[V\circ\xi](0)=V_0$ then $V(y)=[V\circ\xi](1)$ is uniquely defined for any $y\in\mathbb{R}^M$.
\end{proof}

\section{Low-rank representation of the operator equation}%
\label{sec:multivariate}

This section is concerned with a low-rank discretization of the multivariate system $\discretized{B}\discretized{u}=\discretized{f}$ in order to make computations become feasible.
We first briefly illustrate how the \emph{alternating least squares algorithm} (ALS) can be employed to solve the high-dimensional linear system $\discretized{B}\discretized{u} = \discretized{f}$ in the TT format.
For simplicity, we utilize a plain ALS algorithm without rank adaptation, but other more sophisticated approaches, such as the \emph{density matrix renormalization group} (DMRG) method~\cite{Oseledets2012,oseledets_dmrg_2011} or some Riemannian optimization schemes~\cite{Steinlechner2016}, can be employed as well.

\subsection{The Alternating Linear Scheme}%
\label{sec:multivariate:ALS}

For the representation of a function $v_{\mathrm{a}}\in\mathcal{V}_{\mathrm{a}}$ in the tensor train format, we use the space $\mathcal{V}_{d_{\mathrm{a}}}$ and enforce the homogenous boundary condition $v_{\mathrm{a}}(y_0) = 0$ via
a regularization term.
Since the residual converges, the initial condition is satisfied up to a controllable error.
In the following we give a superficial recollection of the ALS algorithm and refer to~\cite{oseledets_dmrg_2011, holtz_alternating_2012} for a more detailed description.

Denote by $\discretized{P}\in \mathbb{R}^{d_a^M}$ the vector of basis functions evaluated in $y_0$, i.e.\ $\discretized{P}[\mu] := P_\mu(y_0)$ for any $\mu\in[d_a]^M$.
Since $\Vert \discretized{B}\discretized{u}-\discretized{f} \Vert_{2}^{2} = \sum_{m=1}^{M} \Vert \discretized{B}_m \discretized{u}-\discretized{f}_m \Vert_{2}^{2}$,
the regularized problem reads
\begin{align}
  \label{eqn:multivariate:global_system}
  \operatorname*{argmin}_{\discretized{u}\in\mathbb{R}^{d_a^M}}\quad \Vert \discretized{P}^\intercal\discretized{u} \Vert_{2}^{2} + \lambda \sum_{m=1}^{M} \Vert \discretized{B}_m\discretized{u}-\discretized{f}_m \Vert_{2}^{2},
\end{align}
where $\discretized{P}^\intercal\discretized{u}$ denotes the Frobenius-inner product of the two tensors $\discretized{P}$ and $\discretized{u}$.
The regularization parameter $\lambda$ controls the tradeoff between minimizing the residuum and enforcing the initial condition and has to be chosen by the practitioner.
For the sake of clarity and simplicity, however, we choose $\lambda=1$.
Inspired by the ALS, this functional can be minimized in an alternating fashion.
The tensor $\discretized{u}$ can be written in the tensor train format as $\discretized{u} := \discretized{Q}_k\discretized{U}_k$ where $\discretized{U}_k$ is the $k$-th component tensor and $\discretized{Q}_k$ is an operator that represents the contraction of this component tensor with the remaining tensor network.
The ALS then solves~\eqref{eqn:multivariate:global_system} by optimizing
\begin{align} \label{eqn:multivaraite:local_system}
  \operatorname*{argmin}_{\discretized{U}_k\in\mathbb{R}^{r_k\times d_a\times r_{k+1}}}\quad \Vert \discretized{P}^\intercal\discretized{Q}_k\discretized{U}_k \Vert_{2}^{2} + \sum_{m=1}^{M} \Vert \discretized{B}_m\discretized{Q}_k\discretized{U}_k-\discretized{f}_m \Vert_{2}^{2}
\end{align}
cyclically for each $k=1,\ldots,M$, until some convergence criterion is satisfied.
Each cycle is referred to as an ALS iteration step or \emph{sweep}.
The first order optimality condition of the optimization problem~\eqref{eqn:multivaraite:local_system} reads
\begin{equation*}
    \left(\discretized{Q}_k^\intercal \discretized{P}\discretized{P}^\intercal\discretized{Q}_k
    + \sum_{m=1}^M \discretized{Q}_k^\intercal \discretized{B}_m^\intercal\discretized{B}_m \discretized{Q}_k \right) \discretized{U}_k
    = \sum_{m=1}^M \discretized{Q}_k^\intercal \discretized{B}_m^\intercal\discretized{f}_m .
\end{equation*}
Note that this equation, as well as the subsequent formulas~\eqref{eqn:multivariate:ALS} and~\eqref{eqn:multivariate:ALSalgorithm}, have to be understood in a classical linear sense.
I.e.\ $\discretized{B}_m$, $\discretized{Q}_k$ and $\discretized{W}$ are classical linear operators and $\discretized{P}$, $\discretized{f}$ and $\discretized{b}$ are vectors.
Their nature as TT operators and TT tensors, respectively, is not represented.
The transpositions and contractions in these formulas should, in particular, not be confused with the operations on component tensors, defined in Section~\ref{sec:notation}.
Given the operator and right-hand side
\begin{align} \label{eqn:multivariate:ALS}
    \discretized{W} := \discretized{P}\discretized{P}^\intercal + \sum_{m=1}^{M} \discretized{B}_m^\intercal\discretized{B}_m
    \qquad\text{and}\qquad
    \discretized{b} := \sum_{m=1}^{M} \discretized{B}_m^\intercal\discretized{f}_m,
\end{align}
it is easy to find the minimum in~\eqref{eqn:multivaraite:local_system} by cyclically solving the linear system 
\begin{align}
    \label{eqn:multivariate:ALSalgorithm}
    \discretized{Q}_k^\intercal \discretized{W} \discretized{Q}_k \discretized{U}_k
    = \discretized{Q}_k^\intercal \discretized{b}
\end{align}
for each $k=1,\dots,M$.

\subsection{Low-rank representation of operator and right-hand side}%
\label{sec:multivariate:TTformat}

To construct an efficient representation of $\discretized{W}$ and $\discretized{b}$, we start by assembling $\discretized{B}_m$ and $\discretized{f}_m$.
For this we first define the partial derivative operator
\begin{align} \label{eqn:multivariate:D}
  \discretized{D}_m 
  = \discretized{I}^{\otimes (m-1)} \otimes \discretized{D} \otimes \discretized{I}^{\otimes (M-m)}
\end{align}
with the univariate differentiation operator $\discretized{D}[i,j] := (p_i, p_j')_{\mathcal{V}}$.
Now assume that the coefficient tensor $\discretized{h}$ of~\eqref{eqn:multivariate:h_full} can be represented in the TT format as
\begin{align*}
    \discretized{h}[\mu] = \sum_{k=1}^{r} \prod_{j=1}^{M} \discretized{h}_j[k_j,\mu_{j},k_{j+1}],
    \qquad\mu\in[d_h]^M,
\end{align*}
and define the multiplication operator
\begin{align} \label{eqn:multivariate:H_TT}
    \discretized{H}_m[\mu,\nu] 
    = \sum_{k=1}^{r}\prod_{j=1}^M \discretized{H}_{m,j}[k_j,\mu_j,\nu_j,k_{j+1}],
    \qquad \mu\in[d_\mathrm{t}]^M,\ \nu\in[d_\mathrm{a}]^M,
\end{align}
with the component tensors
\begin{align} \label{eqn:multivariate:H_TT_cores}
    \discretized{H}_{m,j}[k_j,\mu_j,\nu_j,k_{j+1}]
    &= \sum_{i_1=1}^{d_{\mathrm{h}}} \boldsymbol{\tau}[\mu_j,\nu_j,i_1] \discretized{h}_j[k_j,i_1,k_{j+1}]
    \qquad\text{for } j\ne m \text{ and}\\
    \label{eqn:multivariate:H_TT_cores:2}
    \discretized{H}_{m,m}[k_m,\mu_m,\nu_m,k_{m+1}]
    &= \sum_{i_1=1}^{d_{\mathrm{h}}} \sum_{i_2=1}^{d_{\mathrm{h}}}
    \boldsymbol{\tau}[\mu_m,\nu_m,i_1]\discretized{D}[i_1,i_2] \discretized{h}_m[k_m,i_2,k_{m+1}],
\end{align}
where $\discretized{\tau}[i,j,k] = (p_i p_j, p_k)_{\mathcal{V}}$ denotes the triple product tensor.
Since the partial derivative operators~\eqref{eqn:multivariate:D} are of rank $1$ and the multiplication operators~\eqref{eqn:multivariate:H_TT} are of rank $r$, the operators $\discretized{B}_m := \discretized{D}_m - \discretized{H}_m$ are of rank $(r+1)$ and $\discretized{f}_m := \discretized{D}_m\discretized{h}$ are of rank $r$.
For the initial condition, note that $\discretized{P}=\discretized{P}_1\otimes\dots\otimes\discretized{P}_M$ constitutes a rank one tensor with component tensors $\discretized{P}_j = (p_0(y_{0,j}),\dots,p_{d_a}(y_{0,j}))^\intercal$.

A naive computation of the sums in~\eqref{eqn:multivariate:ALS} would result in representation ranks that increase linearly in the number of parameters $M$.
However, we can exploit the structure of the operators $\discretized{W}$ and right-hand side $\discretized{b}$ which resembles the structure of Laplace-like operators~\cite{kazeev2012LaplaceLike}.
This allows us to bound the ranks of $\discretized{W}$ and $\discretized{b}$ independent of $M$.
Recall that $B_{m,j}$ denotes the $j$-th component tensor in the tensor train representation of $B_m$ and note that $B_{m_1,j} = B_{m_2,j}$ for any $j\in\mathbb{N}$ and $m_1,m_2\ne j$.
To emphasize this, we write $\discretized{C}_{j} := \discretized{B}_{m,j}$ when $m\ne j$ and $B_{j,j}$ otherwise.
Using the notation for the contraction and concatenation of component tensors, introduced in Section~\ref{sec:notation}, the component tensors of $\discretized{W}$ are then given by
\begin{align}
  \label{eqn:multivariate:W_TT_cores:W_1}
  \discretized{W}_1 &= 
  \begin{bmatrix}
    \discretized{P}_1^{\vphantom{\intercal}}
    \discretized{P}_1^\intercal & 
    \discretized{B}_{1,1}^\intercal 
    \discretized{B}_{1,1}^{\vphantom{\intercal}} & 
    \discretized{C}_1^\intercal 
    \discretized{C}_1^{\vphantom{\intercal}}
  \end{bmatrix},\\
  \label{eqn:multivariate:W_TT_cores:W_j}
  \discretized{W}_j &= 
  \begin{bmatrix}
    \discretized{P}_j^{\vphantom{\intercal}}
    \discretized{P}_j^\intercal & 0 & 0 \\ 0 &
    \discretized{C}_j^\intercal
    \discretized{C}_j^{\vphantom{\intercal}} & 0 \\ 0 &
    \discretized{B}_{j,j}^\intercal
    \discretized{B}_{j,j}^{\vphantom{\intercal}} &
    \discretized{C}_j^\intercal
    \discretized{C}_j^{\vphantom{\intercal}}
  \end{bmatrix}
  \qquad\text{for } j=2,\dots,M-1,\\
  \label{eqn:multivariate:W_TT_cores:W_M}
  \discretized{W}_M &= 
  \begin{bmatrix}
    \discretized{P}_M^{\vphantom{\intercal}}
    \discretized{P}_M^\intercal \\
    \discretized{C}_M^\intercal
    \discretized{C}_M^{\vphantom{\intercal}} \\
    \discretized{B}_{M,M}^\intercal
    \discretized{B}_{M,M}^{\vphantom{\intercal}}
  \end{bmatrix},
\end{align}
where $\discretized{P}_j\discretized{P}_j^\intercal$ are interpreted as tensors of order $4$ in $\mathbb{R}^{1\times d_{\mathrm{a}}\times d_{\mathrm{a}}\times 1}$.

From this it is easy to see that the rank of $\discretized{W}$ is given by $2(r+1)^2+1$ and is thus independent of $M$.
In an analogous way, the component tensors of $\discretized{b}$ are given by
\begin{align} \label{eqn:multivariate:b_TT_cores}
  \discretized{b}_1 = 
  \begin{bmatrix}
    \discretized{B}_{1,1}^\intercal
    \discretized{f}_{1,1}^{\vphantom{\intercal}} &
    \discretized{C}_1^\intercal
    \discretized{g}_1^{\vphantom{\intercal}}
  \end{bmatrix},\quad
  \discretized{b}_j = 
  \begin{bmatrix}
    \discretized{C}_j^\intercal
    \discretized{g}_j^{\vphantom{\intercal}} & 0 \\
    \discretized{B}_{j,j}^\intercal
    \discretized{f}_{j,j}^{\vphantom{\intercal}} & 
    \discretized{C}_j^\intercal
    \discretized{g}_j^{\vphantom{\intercal}}
  \end{bmatrix}
  \quad\mbox{and}\quad
  \discretized{b}_M = 
  \begin{bmatrix}
    \discretized{C}_M^\intercal
    \discretized{g}_M^{\vphantom{\intercal}} \\
    \discretized{B}_{M,M}^\intercal
    \discretized{f}_{M,M}^{\vphantom{\intercal}}
  \end{bmatrix},
\end{align}
for $j=2,\ldots,M-1$ and where $\discretized{g}_j := \discretized{f}_{m,j}$ for some $m\ne j$.
This shows that $\discretized{b}$ can be represented in the TT format with rank $2r(r+1)$ and is
independent of the dimension $M$ as well.

Finally, observe that the constant function $\exp h(y_0)$ can be represented by a TT tensor of rank one.
This means that the TT representation of the solution $u + \exp h(y_0)$ to the original problem can be computed in a straight-forward manner and the rank will increase by at most one.

\subsection{Algorithmic realization}%
\label{sec:multivariate:algorithm}

In the following we discuss some intricacies that arise in the application of an ALS to compute the Galerkin approximation~\eqref{eqn:multivariate:ALSalgorithm} in the TT format.
The method itself is rather straight-forward and we provide pseudo-code in Algorithm~\ref{alg:expTT}.
The method $\texttt{ALSsweep}(\discretized{W},\discretized{b},\discretized{u}_{\mathrm{a}})$ in line~\ref{alg:expTT:ALS} realizes one complete sweep of the ALS algorithm~\eqref{eqn:multivaraite:local_system}, i.e.\ it solves the local linear system~\eqref{eqn:multivariate:ALSalgorithm} for each component tensor $U_{1},\ldots,U_M$. %

\begin{algorithm2e}[htb]%
    \caption{Low-rank exponential approximation via Galerkin projection (\texttt{ExpTT})}%
    \label{alg:expTT}
    \KwInput{%
        TT representation of the exponent $h$,
        ansatz space dimension $d_a$,
        initial point $y_0$,
        stopping tolerance $\varepsilon$,
        and maximum number of iterations $N_\mathrm{ITER}$.
    }
    \KwOutput{%
        TT approximation $\discretized{u}_a$ of $\exp h$ and
        discrete relative residual $\mathrm{res}$.
    }
    Build operators $\discretized{D}_m$ and $\discretized{H}_m$ for $m=1,\dots,M$ according to~\eqref{eqn:multivariate:D} and~\eqref{eqn:multivariate:H_TT_cores}--\eqref{eqn:multivariate:H_TT_cores:2}.\\
    Use $\discretized{D}_m$, $\discretized{H}_m$ to assemble cores $\discretized{B}_{m,m}$, $\discretized{C}_m$, $\discretized{P}_m$, $\discretized{f}_{m,m}$ and $\discretized{g}_m$.\\
    Construct low-rank operator $\discretized{W}$ and right-hand side $\discretized{b}$ according to~\eqref{eqn:multivariate:W_TT_cores:W_1}--\eqref{eqn:multivariate:b_TT_cores}.\\
    Initialize the coefficient tensor $\discretized{u}_a$.\\ %
    \For{$j=1,\dots,N_\mathrm{ITER}$}{%
        Set $\discretized{u}_a = \texttt{ALSsweep}(\discretized{W}, \discretized{b}, \discretized{u}_a)$.\label{alg:expTT:ALS} \\
        \uIf{$\Vert \discretized{W}\discretized{u}_a-\discretized{b} \Vert_2\leq\varepsilon\Vert\discretized{b}\Vert_2$}{\textbf{break}.}
    }
    Set $\mathrm{res} = \Vert \discretized{B}\discretized{u}_a-\discretized{f} \Vert_2/\Vert \discretized{f}\Vert_2$.\label{alg:expTT:res} \\
    Build constant TT tensor $\discretized{c}=\exp h(y_0)$.\\
    Set $\discretized{u}_a = \discretized{u}_a + \discretized{c}$.\\
    \Return{$\discretized{u}_a$, $\mathrm{res}$.}
\end{algorithm2e}

Algorithm~\ref{alg:expTT} works in principle for any polynomial exponent $h$, but the resulting exponential might require large ansatz spaces and ranks.
This is a general problem of approximation methods and results in larger memory requirements and increase computational costs.

We propose to circumvent this problem by utilizing a simple scaling and squaring trick.
For a given scaling $s\in\mathbb{N}_{>0}$, we apply Algorithm~\ref{alg:expTT} to the scaled exponent $\tilde{h} := 2^{-s}h$ and compute the sought exponential via $\exp(h(y)) = \exp(\tilde{h}(y))^{2^s}$.
Since $\exp\circ\tilde{h}$ grows at a slower rate than $\exp\circ h$, this reduces the required ansatz space dimension.
A similar scaling approach is investigated in~\cite{Khoromskij2011} for Quantics Tensor Train approximations using a Taylor series expansion and Horner's rule to compute the scaled exponential instead of the Galerkin scheme proposed in this work.
Choosing the scaling $2^s$ minimizes the number of rescaling steps as we only need to compute the square $\exp(\tilde{h})^2$ $s$-times.
In doing so, we round and project $\exp(\tilde{h})^2$ onto $\mathcal{V}_{\mathrm{a}}$ in each step to prevent dimensions and ranks of the coefficient tensor from growing too much.
Other rescaling schemes are considered in~\cite{Khoromskij2011}, leading to better approximations at the cost of more rescaling steps.
As the rounding after each rescaling is the most time-consuming operation, we opt for the squaring approach to reduce the number of iterations.
Even though this trick introduces an additional error, we observe in our numerical examples that the memory issues can be overcome while still providing reasonably good results.
The resulting pseudo code is presented in Algorithm~\ref{alg:scale}.
Note that the output of Algorithm~\ref{alg:scale} exhibits the same ansatz space dimension as the one of Algorithm~\ref{alg:expTT}.
Nonetheless, this approach should be preferred because of two reasons.
First, the memory complexity of the TT tensor $\tilde{\discretized{u}}_a$ depends only linearly on the dimension $\tilde{d}_{\mathrm{a}}$ of the ansatz space while the complexity of the TT operator $\discretized{W}$ depends quadratically on $\tilde{d}_{\mathrm{a}}$.
This reduces the computational cost of applying Algorithm~\ref{alg:expTT} in line~\ref{alg:scale:expTT} of Algorithm~\ref{alg:scale}.
Second, multiplication can be performed efficiently in the TT format and it is easy to balance accuracy and computational cost.
This can be done by performing projections to lower dimensional discrete spaces or by rounding via a truncated SVD as in line~\ref{alg:scale:power} of Algorithm~\ref{alg:scale}.
\begin{remark}
    Scaling $h$ works well to reduce the ansatz space dimension and the rank of exponentials but may not work for other holonomic functions.
    However, we expect that similar tricks can be applied in these cases.
    For $\sin$ and $\cos$ for example a simple approach could be to reduce the frequency by approximating $u(h(sy))$ instead of $u(h(y))$ and to scale the basis functions afterwards.
\end{remark}

\begin{algorithm2e}[htb]%
    \caption{Scaled ExpTT}%
    \label{alg:scale}
    \KwInput{%
        TT representation of the exponent $h$,
        approximation dimensions $d_a$,
        scaling $s$,
        approximation dimensions for scaled problem $\tilde{d}_a$,
        initial condition $y_0$,
        stopping tolerance $\varepsilon$,
        rescaling tolerance $\varepsilon_s$,
        and maximum number of iterations $N_\mathrm{ITER}$.
    }
    \KwOutput{%
        TT approximation $\discretized{u}_a$ of $\exp h$.
    }
    Set $\tilde{h} = 2^{-s} h$.\\
    Compute $\tilde{\discretized{u}}_a, \mathrm{res} = \texttt{ExpTT}(\tilde{\discretized{h}}, \tilde{d}_a, y_0, \varepsilon, N_{\mathrm{ITER}})$.\label{alg:scale:expTT}\\
    Set $\discretized{u}_a = \tilde{\discretized{u}}_a$.\\
    \For{$j=1,\dots,s$}{
        Set $\discretized{u}_a = \discretized{u}_a \cdot \discretized{u}_a$.\label{alg:scale:power}\\
        Project $\discretized{u}_a$ onto $\mathcal{V}_{d_a}$ and round to tolerance $\varepsilon_s$.\\
    }
    \Return{$\discretized{u}_a$.}
\end{algorithm2e}

\section{Numerical experiments}%
\label{exptt:sec:experiments}

In this section we examine the numerical performance of the proposed Galerkin method to approximate the exponential of a function in TT format.
We compare the results of Algorithm~\ref{alg:scale} with those of a direct regression using the VMC approach~\cite{ESTW19}.
We also implemented the Taylor expansion approach described in~\cite{Khoromskij2011} but observed a similar performance as for the regression.
We hence refrain from a comparison to this approach in the following discussion.
To assess its practical potential, we investigate several benchmark problems common in Uncertainty Quantification.

First, we consider the reconstruction of a log-normal diffusion coefficient which appears frequently when modeling the porosity in the prototypical Darcy equation.
To test the performance of our algorithm, we setup a benchmark diffusion coefficient with small covariance length and compute the exponential of it's Karhunen-Lo\`eve expansion.

Second, we test our algorithm on a common benchmark diffusion coefficient used e.g.\ in~\cite{EGSZ14, dolgov2019TTdensities} to provide a comparison to state-of-the-art methods.
This problem exhibits several complications analytically~\cite{galvis-sarkis} and numerically~\cite{mugler2013convergence} and often is tackled with sampling techniques, in particular (multi-level) Monte Carlo methods~\cite{charrier2013finite,EMN}.
Functional (polynomial chaos) approaches were e.g.\ examined in~\cite{EMPS20} with an adaptive stochastic Galerkin FEM in TT format.
Stochastic collocation was e.g.\ used in~\cite{babuvska2010stochastic,nobile2016adaptive}.
In a third experiment, we investigate how our proposed method performs in approximating a density obtained by highly correlated and jointly Gaussian random variables, which is typically a difficult benchmark for tensorized approximation schemes but often encountered in real applications.
Finally, the recovery of the likelihood in the context of Bayesian inverse problems is considered, the theory of which can e.g.\ be found in~\cite{Stu10}.
Usually, again sampling methods are used for this often high-dimensional problem, the most popular of which certainly is the Markov chain Monte Carlo (MCMC) method.
Nevertheless, some recent developments showed that functional approximations of (posterior) densities are feasible and may prove beneficial in terms of convergence rates~\cite{EGM20, EMS18,dolgov2019TTdensities,rohrbach2020rank}.

These experiments are similar to those performed in~\cite{EMPS20,ESTW19,DS19,DKLM15,EHLMW14} for the log-normal diffusion coefficient and to those in~\cite{EGM20, EMS18, dolgov2019TTdensities} for the likelihood reconstruction.
We compare the approximation accuracy and computational time of our method to the results of other techniques from the literature\footnote{It has to be noted that the runtime of the previously reported experiments cannot be compared directly to what we observe with our (unoptimized) implementation. We nevertheless think that this provides a useful indication of the required computational effort.}, namely~\cite{EHLMW14,DKLM15,DS19}.

As weight function $\rho$ we choose the density of the standard Gaussian distribution.
As a basis for the trial and test spaces as well as for the parametrization of the exponent function $h$, we employ tensor products of normalized (probabilistic) Hermite polynomials.
One reason for this is that the triple product tensor $\kappa$ can be computed analytically~\cite{Ull08,Mal97,EMPS20}, which increases the overall computational performance.
Moreover, differentials of polynomials are explicitly known and thus cheap to compute and, in the case of Hermite polynomials, Hermite polynomials again.

\begin{proposition}%
  \label{lem:experiments:appell_sequence}
  For the normalized multivariate Hermite polynomials it holds that
  \begin{align*}
    \partial_{m} P_\nu = \sqrt{\nu_m}P_{\nu-e_m},
    \qquad\mbox{for all }m=1,\dots,M,
  \end{align*}
  where $e_m$ denotes the standard basis vector in $\mathbb{R}^M$.
  Moreover, the univariate differentiation operator $\discretized{D}$ is given analytically by $\discretized{D}_{ij} = \sqrt{j}\delta_{i,j-1}$.
\end{proposition}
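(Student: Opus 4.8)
The plan is to reduce both assertions to a single univariate identity for the normalized Hermite polynomials, namely $p_j' = \sqrt{j}\,p_{j-1}$, from which everything else follows by the product rule and orthonormality. First I would recall that the normalized probabilist's Hermite polynomials are $p_j = \mathrm{He}_j/\sqrt{j!}$, where $\mathrm{He}_j$ are the (un-normalized) probabilist's Hermite polynomials satisfying $\Vert \mathrm{He}_j\Vert_{L^2(\mathbb{R},\rho)}^2 = j!$ for the standard Gaussian weight $\rho$, so that indeed $\Vert p_j\Vert_{\mathcal{V}} = 1$. The key classical input is the \emph{Appell property} $\mathrm{He}_j' = j\,\mathrm{He}_{j-1}$, which is a standard fact about Hermite polynomials and motivates the label of the proposition.

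The first step is then a short computation transferring this to the normalized basis. Dividing by $\sqrt{j!}$ and writing $\mathrm{He}_{j-1} = \sqrt{(j-1)!}\,p_{j-1}$ gives
\[
  p_j' = \frac{\mathrm{He}_j'}{\sqrt{j!}} = \frac{j\,\mathrm{He}_{j-1}}{\sqrt{j!}} = \frac{j\sqrt{(j-1)!}}{\sqrt{j!}}\,p_{j-1} = \sqrt{j}\,p_{j-1}.
\]
With this univariate identity in hand, the multivariate claim is immediate from the tensor-product structure $P_\nu(y) = \prod_{n=1}^M p_{\nu_n}(y_n)$: applying the product rule, only the $m$-th factor is differentiated, so
\[
  \partial_m P_\nu = p_{\nu_m}'(y_m)\prod_{n\neq m} p_{\nu_n}(y_n) = \sqrt{\nu_m}\,p_{\nu_m-1}(y_m)\prod_{n\neq m}p_{\nu_n}(y_n) = \sqrt{\nu_m}\,P_{\nu-e_m},
\]
which is the asserted formula.

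For the operator entries I would simply substitute the univariate identity into the definition $\discretized{D}_{ij} = (p_i, p_j')_{\mathcal{V}}$ and invoke orthonormality of $\{p_j\}$ in $\mathcal{V} = L^2(\mathbb{R},\rho)$, giving
\[
  \discretized{D}_{ij} = (p_i, p_j')_{\mathcal{V}} = \sqrt{j}\,(p_i, p_{j-1})_{\mathcal{V}} = \sqrt{j}\,\delta_{i,j-1}.
\]
There is no genuine obstacle here; the proof is essentially a bookkeeping exercise, and the only point requiring care is the normalization constant, i.e.\ correctly tracking the $\sqrt{j!}$ factors so that the classical coefficient $j$ in $\mathrm{He}_j' = j\,\mathrm{He}_{j-1}$ becomes $\sqrt{j}$ in the normalized setting rather than $j$ or $\sqrt{(j-1)!/j!}$ mishandled. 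One should also note the boundary case $j=0$, where $p_0$ is constant and $p_0' = 0$, consistent with the convention $\sqrt{0}\,p_{-1}=0$.
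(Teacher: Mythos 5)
Your proof is correct and follows essentially the same route as the paper: both reduce to the Appell property $\mathrm{He}_j' = j\,\mathrm{He}_{j-1}$, track the $\sqrt{j!}$ normalization to obtain $p_j' = \sqrt{j}\,p_{j-1}$, and then invoke the tensor-product structure and orthonormality for the multivariate formula and the operator entries. Your version is slightly more explicit about the boundary case $j=0$ and the bookkeeping of normalization constants, but there is no substantive difference.
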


\begin{proof}
  The normalized (probabilistic) Hermite polynomials of degree $k$ are given for any $j=1,\dots,M$ by $P^j_{k} = H_{k} / \sqrt{k!}$.
  Since the Hermite polynomials constitute an Appell sequence, i.e. $ H_k' = kH_{k-1}$ for all $k\in\mathbb{N}$, it holds $(P^j_{k})' = \sqrt{k}P^j_{k-1}$.
  Hence, ${(P_k^j, (P_\ell^j)')}_\rho = \sqrt{\ell}\delta_{k,\ell-1}$.
  The multiplicative structure of the tensorized Hermite polynomials completes the claim.
\end{proof}

Note that the product of two polynomials can always be represented by a polynomial of bounded degree. This implies that the triple product expansion is finite, i.e.\ that $\tau_{ijk}=0$ for any $k>i+j$.
This guarantees that we can choose ansatz and test spaces $\mathcal{V}_{\mathrm{a}}$ and $\mathcal{V}_{\mathrm{t}}$ such that the condition $B(\mathcal{V}_{\mathrm{t}}) \subseteq \mathcal{V}_{\mathrm{t}}$ of Theorem~\ref{thrm:univariate:res_equiv_err} is satisfied.

The finite element discretization is based on the open source package \texttt{FEniCS}~\cite{fenics} and all finite element computations use uniform triangulations of the unit square $D=[0,1]^2$.
The fully discretized problem $\discretized{W}\discretized{u}=\discretized{b}$ is solved in the TT format using Algorithm~\ref{alg:scale}, which relies on the TT representation and the ALS algorithm implemented in the open source tensor library \texttt{xerus}~\cite{xerus}.

Our approach and the VMC method~\cite{ESTW19} (a tensor regression technique employed several times in this section) rely on the choice of an initial guess.
Since this is chosen randomly, the obtained approximation as well as the CPU time for the computation of both methods may vary slightly for repeated runs of the same experiment.
Nevertheless, the deviations are minuscule and we hence refrain from a statistical assessment of the results.

\paragraph*{Computation of the error.}%

The relative discrete residual and (up to the data oscillation) the equivalent relative energy error is denoted by 
$\operatorname{res}(\discretized{u}_a) := \Vert \discretized{B}\discretized{u}_a-\discretized{f}\Vert_2\Vert \discretized{f}\Vert_2^{-1}$.
The residual is computed according to Algorithm~\ref{alg:expTT} based on the solution of~\eqref{eqn:multivariate:ode_var}.
If the exponent is scaled, i.e.\ if we apply Algorithm~\ref{alg:scale}, the relative residual of the scaled version of~\eqref{eqn:multivariate:ode_var} is considered.
For comparison, we additionally compute the absolute and relative $L^2$-errors via a Monte Carlo estimation.
For this, a set of $N_{\mathrm{MC}}$ independent samples $y^{(i)}\sim\mathcal{N}(0,I)$ is drawn.
The approximate solution $u_a\in\mathcal{V}_a$ obtained by our algorithm evaluated in the samples $y^{(i)}$ is compared to the corresponding (deterministic) sampled solution $u(y^{(i)})$.
The absolute and relative mean squared errors are approximated by a Monte Carlo quadrature for each $\hat v\in\mathcal{V}_a$,
\begin{equation}
    \label{eq:experiments:error_L2}
    \begin{aligned}
        \mathcal{E}_u(\hat v) &= \frac{1}{N_{\mathrm{MC}}} \sum_{i=1}^{N_{\mathrm{MC}}} \Vert u(y^{(i)}) - \hat v(y^{(i)})\Vert_*
        \quad\mbox{and}\quad \\
        \varepsilon_u(\hat v) &= \frac{1}{N_{\mathrm{MC}}} \sum_{i=1}^{N_{\mathrm{MC}}} \frac{\Vert u(y^{(i)}) - \hat v(y^{(i)})\Vert_*}{\Vert u(y^{(i)})\Vert_*},
    \end{aligned}
\end{equation}
where $\Vert \bullet \Vert_*$ is either the absolute value if $u(y)\in\mathbb{R}$ or $\Vert \bullet \Vert_{L^2(D)}$ if $u(y)\in H^1_0(D)$.
For the latter case, we additionally introduce the average relative $L^\infty$-error
\begin{align}
    \label{eq:experiments:error_Linf}
    \varepsilon_u^\infty(\hat v) = \frac{1}{N_{\mathrm{MC}}} \sum_{i=1}^{N_{\mathrm{MC}}} \frac{\Vert u(y^{(i)}) - \hat v(y^{(i)})\Vert_{L^\infty(D)}}{\Vert u(y^{(i)})\Vert_{L^\infty(D)}},
\end{align}
to allow a comparison to results of previous works.
The choice $N_{\mathrm{MC}}=10^{3}$ proved to be sufficient to obtain reliable estimates in our experiments.

\paragraph*{The random model problem.}%

The experiments we investigate concern the stationary random diffusion problem as described in~\cite{HOANG2014,GITTELSON2010,SG11} on the unit square $D=[0,1]^2$.
Concretely, for almost all $y\in\mathbb{R}^M$ we consider the random elliptic problem
\begin{equation}
    \label{exptt:eq:experiments:darcy}
    \begin{aligned}
        -\operatorname{div}( \kappa(x,y)\nabla w(x,y)) &= f(x), &\mbox{in }D,\\
        w(x,y) &= 0, &\mbox{on }\partial D.
    \end{aligned}
\end{equation}
For the sake of a clear presentation, the source term $f\in L^2(D)$ and the boundary conditions are assumed to be deterministic.
The diffusion coefficient $\kappa\colon D\times\mathbb{R}^M\to\mathbb{R}$ is typically considered log-normal and isotropic, i.e.\ $\log\kappa$ is an isotropic Gaussian random field~\cite{EHLMW14}.

Pointwise solvability of~\eqref{exptt:eq:experiments:darcy} for almost all $y\in\mathbb{R}^M$ is guaranteed by a Lax--Milgram argument in~\cite{galvis-sarkis,SG11}.
Well-posedness of the variational parametric problem is way more intricate and requires a larger solution space.
We refer to~\cite{SG11} for a detailed discussion.
Following the lines of e.g.~\cite{EGSZ14}, we assume a truncated Karhunen-Lo\`eve expansion of the affine exponent $\gamma=\log\kappa$ of the form
\begin{align}
    \label{eq:experiments:KLexpansion}
    \gamma(x,y)
    = \sum_{m=1}^{M} \gamma_m(x) y_m
    \qquad\mbox{for all }x\in D\mbox{ and almost all }y\in\mathbb{R}^M.
\end{align}
The expansion coefficient functions $\gamma_m$ enumerate all relevant planar Fourier modes in increasing total order and are given by
\begin{align}
    \label{eq:experiments:KLexpansion:h_m}
    \gamma_m(x)
    = \frac{9}{10\zeta(\sigma)} m^{-\sigma} \cos\bigl(2\pi\beta_1(m) x_1\bigr)\, \cos\bigl(2\pi\beta_2(m) x_2\bigr),
\end{align}
where $\zeta$ is the Riemann zeta function and for $k(m) = \lfloor -\frac{1}{2} + \sqrt{\frac{1}{4} +2m} \rfloor$,
\begin{align*}
    \beta_1(m) = m - k(m) \frac{k(m)+1}{2}
    \qquad\mbox{and}\qquad
    \beta_2(m) = k(m) - \beta_1(m).
\end{align*}
For our experiments we set a slow decay rate of $\sigma=2$.
For the deterministic discretization we choose either lowest order discontinuous Lagrange elements or continuous Lagrange elements.
However, other finite elements can be used with only slight adaptations as well.

\paragraph*{Bayesian log-likelihoods.}%

This section gives a short review of the Bayesian approach to inverse problems.
Its aim is to illustrate how our method can be used in this setting.
A comprehensive description on the Bayesian perspective on inverse problems can e.g.\ be found in~\cite{EGM20,Stu10,DS16}.

For an uncertain input $y\in\mathbb{R}^M$ consider the forward map
\begin{align*}
    \hat G\colon \mathbb{R}^M\to H_0^1(D),
    \qquad y\mapsto w(y), 
\end{align*}
where the model output $w(y)\in H_0^1(D)$ is chosen as the solution of~\eqref{exptt:eq:experiments:darcy}.
The inverse problem can then be formulated as
\begin{align}
    \label{eq:experiments:inverseProb}
    \mbox{For any given }\hat{w}\in H_0^1(D),\mbox{ find }y\in\mathbb{R}^M,
    \mbox{ such that }\hat G(y)=\hat{w}.
\end{align}
In practical applications, it is not possible to directly observe $\hat{w}\in H_0^1(D)$.
Hence, we assume that the measurement process of $\hat{w}$ is given by the bounded linear \emph{observation} operator
$\mathcal{O}\colon H_0^1(D)\to\mathbb{R}^J$ for some $J\in\mathbb{N}$.
These observations are usually either obtained directly from sensors or after a postprocessing step.
In our case, the observation operator describes the representation of a function in $H_0^1(D)$ by a finite element discretization with $J$ degrees of freedom.
When Courant FE are used, the degrees of freedom are equivalent to point observations in the domain related to the used mesh.
In most applications, exact (deterministic) solutions to~\eqref{eq:experiments:inverseProb} do not exist or are not unique, which implies that the inverse problem is
ill-posed.
A remedy is to introduce some kind of regularization to~\eqref{eq:experiments:inverseProb}.
The most commonly chosen probabilistic approach introduces a random additive centered Gaussian measurement noise $\eta\sim\mathcal{N}(0,\Sigma)$ with covariance $\Sigma\in\mathbb{R}^{J\times J}$.
With this, noisy observations are defined by
\begin{align}
    \label{eq:experiments:noisyForward}
    \delta = (\mathcal{O} \circ \hat G)(y) + \eta =: G(y) + \eta.
\end{align}
Under some mild assumptions on $G$, one can show a continuous version of the Bayes formula.
This yields the existence of a unique Radon--Nikodym derivative of the posterior measure $\pi_\delta$ of the conditional random variable $y\vert\delta$ with respect to the prior measure $\pi_0$ of $y$.
We refer to~\cite{Stu10,DS16} and~\cite{schwab2012sparse} for an analysis in the context of parametric PDEs.
Assuming the Gaussian noise $\eta$ is independent of $y$ this writes as
\begin{align}
    \label{eq:experiments:posterior}
    \frac{\mathrm{d}\pi_{\delta}}{\mathrm{d}\pi_0}(y) = Z^{-1} L(y; \delta)\, ,
    \qquad Z:=\mathbb{E}_{\pi_0}[L(y;\delta)],
\end{align}
with the likelihood $L(y;\delta):=\exp\ell(y;\delta)$, the negative Bayesian potential or log-likelihood
\begin{align}
    \label{eq:experiments:bayesPotential}
    \ell(y;\delta)
    := -\frac{1}{2} (\delta-G(y))\cdot\Sigma^{-1}(\delta-G(y)),
\end{align}
and a normalization constant $Z$, referred to as \emph{evidence}.

A surrogate for the forward map $G(y)$ can be computed in the TT fromat as presented in~\cite{EMS18,EPS17,EMPS20,ESTW19}.
From this it is easy to derive a representation of the log-likelihood $\ell$ in TT format by simple algebraic operations.
Our approach now provides the means to close the quite challenging, remaining gap to compute the TT representation of the likelihood $L$.
Given $L$ in the TT format, $Z$ can again be computed analytically.

\paragraph*{Initial condition for vector valued functions.}%

The construction of the operator $\discretized{B}$ and right-hand side $\discretized{f}$ in Section~\ref{sec:multivariate} is done with real-valued functions in mind.
However, this is not strictly required by our method.
As an example, for some $y\in\mathbb{R}^M$ consider a discretization of the exponent of the log-normal diffusion coefficient $\gamma(y)$ in a finite element space of dimension $J$ with the expansion
\begin{align*}
    \gamma(x,y) \approx \sum_{j=1}^J \sum_{\mu\in[d_h]^M} \discretized{\gamma}[j,\mu] \varphi_j(x) P_\mu(y)
    \qquad\mbox{for }x\in D\mbox{ and }y\in\mathbb{R}^M,
\end{align*}
where $\{\varphi_j\}_{j=1}^J$ is a basis of the finite element space.
The TT representation of $\discretized{\gamma}$ then reads
\begin{align*}
    \discretized{\gamma}[j,\mu]
    = \sum_{k=1}^r \discretized{\gamma}_0[j,k_1] \prod_{m=1}^M \discretized{\gamma}_m[k_m,\mu_m,k_{m+1}] .
\end{align*}
In this setting it is not straight-forward to perform the construction of the operator $\discretized{B}$ as described in~\eqref{eqn:multivariate:D}--\eqref{eqn:multivariate:H_TT}.
This is because the basis functions $\{\varphi_j\}_{j=1}^J$ for the deterministic mode depend on more than a single variable.
Hence, it may not be clear how to assemble the operator~\eqref{eqn:multivariate:D}.
Moreover, in case of a piecewise constant FE basis, the operator might not even be well-defined.
As an alternative, we choose a set $\{x^{(j)}\}_{j=1}^J$ of interpolation points for the FE space and build the operator $\discretized{B}$ and right-hand side $\discretized{f}$ pointwise for each finite element node $x^{(j)}$.
Here the interpolation points have to be chosen in such a way that a FE function can be recovered uniquely from its values at these points.
Since we use Lagrange FEM we use the Lagrange points together with the standard interpolation.
The resulting equations can be combined in a single system, which 
results in a slightly different operator $\discretized{W}$ and right-hand side $\discretized{b}$ but has no effect on the ranks.

\subsection{Lognormal field with given covariance length}%
\label{exptt:sec:experiments:darcy_results_KLE}

As a first benchmark we assume a diffusion field $\kappa(x,\omega)=\exp(\gamma(x,\omega))$ defined via the exponential of a Gaussian random field $\gamma(x,\omega)$ with covariance
\begin{align*}
  \operatorname{Cov}_{\gamma}(x,z) := c\, \exp\bigl(-\ell^{-2}\Vert x-z\Vert_2^2\bigr),
  \qquad x,z\in D,
\end{align*}
where $c>0$ is a scaling constant and $\ell>0$ is the isotropic covariance length.
Using the Karhunen-Lo\`eve expansion of the Gaussian field $\gamma$ yields the affine representation
\begin{align}
  \label{eq:experiments:KLE_cov_length}
  \gamma(x,\omega) = \sum_{j=1}^{\infty} \sqrt{\lambda_j}\phi_j(x) y_j(\omega),
  \quad\mbox{where}\quad
  \int_D \operatorname{Cov}_{\gamma}(x, z)\,\phi_m(z)\,\mathrm{d} z = \lambda_m \phi_m(x).
\end{align}
The random variables $y_j = y_j(\omega)$ are uncorrelated and jointly Gaussian.
More detail on this subject can be found in e.g.~\cite{Ghanem_Spanos}.
We truncate the affine expansion~\eqref{eq:experiments:KLE_cov_length} by choosing the largest $M\in\mathbb{N}$ eigenvalues such that $\lambda_1 \geq \lambda_{2} \geq \dots \geq \lambda_M$ and set $\gamma_m(x) = \sqrt{\lambda_m}\phi_m(x)$.
This leads to the truncated affine exponent $\gamma_M(x,y) = \sum_{m=1}^{M} \gamma_m(x) y_m$ for which we consider the exact TT representation with ranks $r_m=M+1$.
Additionally, we denote the exponential of the truncated affine field by $\kappa_M(x,y) = \exp(\gamma_M(x,y))$.
For this experiment we vary the isometric covariance length $\ell$ while setting the scale $c=10^{-2}$.
All simulations are conducted with a conforming FE space of piecewise affine Lagrange elements with $J=3017$ degrees of freedom on the L-shaped domain $[0,1]^2 \setminus (\frac{1}{2},1)^2$.
We choose $M=20$ parameters for the Karhunen-Lo\`eve expansion of $\gamma_M$ in our experiments to achieve a reasonable truncation error for all investigated $\ell$.
The relative $L^2$ error of the Karhunen-Lo\`eve approximation is computed with regard to an approximation using $\hat M=100$ expansion terms.

The reconstruction of the exponential is performed using Algorithm~\ref{alg:scale} with scaling $s=5$ and uniform polynomial degree $10$.
As initial points we choose the Lagrange interpolation points $x_0^{(1)},\dots,x_0^{(J)}$ and $y_0=0\in\mathbb{R}^M$ and set the rounding tolerance for the rescaling to $\varepsilon_s=10^{-10}$.
The ALS optimization stops if the residual reaches the threshold $\varepsilon=10^{-8}$.
To assess the results, we compare the reconstruction obtained by Algorithm~\ref{alg:scale} with a direct reconstruction using the \emph{variational Monte Carlo} (VMC) method~\cite{ESTW19}.
This method recovers the tensor train representation $\discretized{f}$ of a function $f$ from a given set of samples $\{(y_i, f(y_i)\}_{i=1}^{N_{\mathrm{VMC}}}$ by minimizing the least-squares loss
\begin{equation*}
    \underset{\discretized{f}}{\text{minimize}} \sum_{i=1}^{N_{\mathrm{VMC}}} \|f(y_i) - \discretized{P}(y_i)^\intercal \discretized{f} \| .
\end{equation*}
Here, $\discretized{P}(y_i)^\intercal \discretized{f}$ denotes the Frobenius inner product of the tensors $\discretized{P}(y_i)$ and $\discretized{f}$, and $\discretized{P}(y_i)$ is defined by $\discretized{P}(y_i)[\mu] := P_\mu(y_i)$.
We use $N_{\mathrm{VMC}}=10^4$ randomly generated training data pairs $(y^{(i)}, \kappa(y^{(i)}))$ in the experiments.

\begin{table}[htpb]%
    \centering
    \ra{1.1}  %
    \begin{tabular}{cccccccc}\toprule
      $\ell^2$
      & $\varepsilon_{\gamma_{\hat M}}(\gamma_M)$
      & $r_{\mathrm{max}}(\kappa_{\mathrm{a}})$
      & $\operatorname{res}(\kappa_\mathrm{a})$
      & $\varepsilon_{\kappa_M}(\kappa_{\mathrm{a}})$
      & $r_{\mathrm{max}}(\kappa_{\mathrm{VMC}})$
      & $\operatorname{res}(\kappa_\mathrm{VMC})$
      & $\varepsilon_{\kappa_M}(\kappa_{\mathrm{VMC}})$\\
      \midrule
      $10$  & $1.59\cdot 10^{-7}$ & $21$  & $1.62\cdot 10^{-4}$ & $4.10\cdot 10^{-7}$ & $9$  & $4.97\cdot 10^{-1}$ & $6.59\cdot 10^{-3}$ \\
      $5$   & $9.30\cdot 10^{-7}$ & $21$  & $1.99\cdot 10^{-4}$ & $1.68\cdot 10^{-6}$ & $7$  & $5.51\cdot 10^{-1}$ & $3.01\cdot 10^{-3}$ \\
      $3$   & $3.45\cdot 10^{-6}$ & $23$  & $1.30\cdot 10^{-4}$ & $3.78\cdot 10^{-6}$ & $8$  & $3.45\cdot 10^{-1}$ & $8.62\cdot 10^{-3}$ \\
      $1$   & $5.79\cdot 10^{-5}$ & $38$  & $1.31\cdot 10^{-4}$ & $8.04\cdot 10^{-6}$ & $8$  & $4.87\cdot 10^{-1}$ & $1.05\cdot 10^{-2}$ \\
      $0.8$ & $1.03\cdot 10^{-4}$ & $42$  & $1.65\cdot 10^{-4}$ & $1.02\cdot 10^{-5}$ & $7$  & $2.99\cdot 10^{-1}$ & $1.24\cdot 10^{-2}$ \\
      $0.5$ & $3.27\cdot 10^{-4}$ & $52$  & $2.10\cdot 10^{-4}$ & $2.69\cdot 10^{-5}$ & $7$  & $3.09\cdot 10^{-1}$ & $1.35\cdot 10^{-2}$ \\
      $0.3$ & $1.05\cdot 10^{-3}$ & $67$  & $2.62\cdot 10^{-4}$ & $1.99\cdot 10^{-4}$ & $5$  & $7.38\cdot 10^{-1}$ & $3.86\cdot 10^{-2}$ \\
      $0.1$ & $8.57\cdot 10^{-3}$ & $114$ & $5.28\cdot 10^{-4}$ & $3.21\cdot 10^{-5}$ & $13$ & $1.30\cdot 10^{0}$  & $6.17\cdot 10^{-2}$ \\
      \bottomrule
    \end{tabular}
    \caption{
      Maximum approximation rank $r_{\mathrm{max}}$, relative residual $\operatorname{res}$ and $L^2$-error $\varepsilon_{\kappa_M}$ for the reconstruction obtained via Algorithm~\ref{alg:scale} and directly by the VMC method in relation to a change in isotropic covariance length $\ell$.
      The computation is carried out on a uniform triangulation of the L-shaped domain with $5776$ triangles ($3017$ FE DoFs) for $M=20$ parameters with uniform polynomial degree $d_\mathrm{a}=10$.
      The relative $L^2$-error of $\gamma_M$ with respect to a more accurate approximation $\gamma_{\hat M}$ for $\hat M = 100$ is given for reference.
    }%
    \label{tab:experiments:logNormal:cov_length}
\end{table}

Table~\ref{tab:experiments:logNormal:cov_length} shows the relative residual and $L^2$-error~\eqref{eq:experiments:error_L2} for our method as well as the VMC reconstruction for different covariance lengths.
The numerical results indicate that the maximum ranks of $\kappa_\mathrm{a}$ grow extensively as the covariance length $\ell$ decreases whereas such strong growth cannot be observed for the ranks of $\kappa_{\mathrm{VMC}}$.
The computed residuals for both approximations are almost insensitive to the decrease in $\ell$, whereas the relative $L^2$-errors of both approximations increase slightly.
Additionally, we observe that the error of the approximation generated by Algorithm~\ref{alg:scale} is at least three orders of magnitude smaller than the direct VMC approximation concerning the global $L^2$-error $\varepsilon_{\kappa_M}$.
The relative residual is three orders of magnitude smaller for the output generated by Algorithm~\ref{alg:scale} as well, which indicates that the residual behaves roughly proportional to $\varepsilon_{\kappa_M}$.
All these observations are coherent with the expectations and reflect the difficulty of approximating the exponential of fields with small covariance length.
Furthermore, we notice that Algorithm~\ref{alg:scale} produces approximations with larger (maximum) ranks than the VMC method.
We take this as an indication that these larger ranks are required to adequately represent the exponential and assume that similar results can be obtained by a direct computation via the VMC method with a (drastically) larger number of training samples.
We also observe that the relative error $\varepsilon_{\gamma_{\hat M}}(\gamma_M)$ increases dramatically as $\ell$ decreases, which again is expected.
Algorithm~\ref{alg:scale} always yields approximations to $\kappa_M$ of at least the same order of magnitude as $\varepsilon_{\gamma_{\hat M}}(\gamma_M)$, whereas the approximation error $\varepsilon_{\kappa_M}(\kappa_{\mathrm{VMC}})$ dominates $\varepsilon_{\gamma_{\hat M}}(\gamma_M)$ by one to four orders of magnitude depending on $\ell$.
Reconstruction times to obtain an approximation by Algorithm~\ref{alg:scale} are less then $10$ minutes in all investigated cases.%

\subsection{Lognormal Darcy diffusion coefficient}%
\label{exptt:sec:experiments:darcy_results}

In this section we investigate the approximation of the log-normal diffusion coefficient $\kappa$ of~\eqref{exptt:eq:experiments:darcy}, which for instance can be used in a stochastic Galerkin scheme.
For the experiments conducted in this section we choose to
discretize the diffusion field $\kappa\in L^2(\mathbb{R}^M,\rho;L^\infty(D))$ with conforming first order Lagrange finite elements for varying degrees of freedom (DoF) and stochastic dimensions $M$.
We focus on these first order elements, since we observe that a comparison with order zero discontinuous and higher order continuous Lagrange elements yields similar results.
Hence, the choice of polynomial order for the spatial component seems to have no influence on the approximation quality of the exponential.
The exponent $\gamma=\log \kappa$ is approximated in the same finite element space as $\kappa$.
Since $\gamma$ is an affine function in the stochastic variables $y$ (cf.~\eqref{eq:experiments:KLexpansion}), we set $d_h=2$.
To obtain an approximation of the exponent $\gamma$ in TT format we again employ the VMC method.
Here, it is in principle possible to find an exact representation of the affine exponent $\gamma$.
Nevertheless, we choose an approximation via VMC for two reasons.
First, the non-intrusive character of VMC allows for easy adaptation to other more complicated problems, which is why we expect this to be commonly done, even if it is feasible to obtain exact representations in specific cases.
Second, there might not exist an exact TT representation for other applications or it might be very intricate to derive.
The choice of an inexact representation of $\gamma$ thus demonstrates the practical relevance of our method due to a broad applicability.
Additionally, since Theorem~\ref{thrm:univariate:res_equiv_err} holds for any approximation of $\kappa$, this is a good opportunity to confirm our theoretical results.

The VMC method only requires evaluations of $\gamma$ in realizations $\{ y^{(i)} \}_{i=1}^{N_{\mathrm{VMC}}}$ to find a low-rank approximation of a function in TT format.
We increase $N_{\mathrm{VMC}}$ as $M$ gets larger to obtain approximations $\gamma_{\mathrm{VMC}}$ of $\gamma$ with relative error $\varepsilon_\gamma(\gamma_{\mathrm{VMC}})\leq 10^{-8}$ for all $M$ depicted in Tables~\ref{tab:experiments:logNormal:M_inc} and~\ref{tab:experiments:logNormal:DoF_inc}.
The approximations of $\kappa$ are computed for uniform polynomial degree $d_a = 10$ for each stochastic component via Algorithm~\ref{alg:scale} with scaling number $s=5$.
We use the Lagrange interpolation points $x_0^{(1)},\dots,x_0^{(J)}$ and $y_0=0\in\mathbb{R}^M$ as initial points.
As the stopping tolerance for all experiments we set $\varepsilon=10^{-8}$ and round the rescaling of the approximation to $\varepsilon_s=10^{-7}$ in each iteration (cf.\ Algorithm~\ref{alg:scale}).

\begin{table}[htpb]%
    \centering
    \ra{1.1}  %
    \begin{tabular}{ccccccc}\toprule
      $M$ &
      $\varepsilon_\gamma(\gamma_{\mathrm{VMC}})$ &
      $\operatorname{res}(\kappa_\mathrm{VMC})$ &
      $\varepsilon_\kappa^\infty(\kappa_{\mathrm{VMC}})$ &
      $\operatorname{res}(\kappa_a)$ &
      $\varepsilon_\kappa^\infty(\kappa_a)$ &
      time [s] \\
      \midrule
      $5 $ & $3.33\cdot 10^{-9}$ & $1.33$ & $1.42\cdot 10^{-3}$ & $3.32\cdot 10^{-3}$ & $7.01\cdot 10^{-5}$ & $ 110.05$ \\
      $10$ & $5.82\cdot 10^{-9}$ & $4.35$ & $2.77\cdot 10^{-2}$ & $1.18\cdot 10^{-3}$ & $2.83\cdot 10^{-5}$ & $ 201.43$ \\
      $15$ & $1.41\cdot 10^{-9}$ & $2.78$ & $1.92\cdot 10^{-2}$ & $6.66\cdot 10^{-4}$ & $1.95\cdot 10^{-5}$ & $ 590.92$ \\
      $20$ & $2.68\cdot 10^{-9}$ & $1.88$ & $1.76\cdot 10^{-2}$ & $3.64\cdot 10^{-4}$ & $1.80\cdot 10^{-5}$ & $1865.92$ \\
      \bottomrule
    \end{tabular}
    \caption{Relative approximation errors and computation time for the
      approximation of the log-normal diffusion coefficient $\kappa$ for different
      numbers of stochastic parameters $M$. The computation is done on a uniform
      triangulation of $D$ with $5000$ triangles ($2601$ FE DoFs) and uses
      stochastic polynomials of degree $10$ or less for each mode. Here, $\kappa_{\mathrm{VMC}}$
      is an approximation of $\kappa$ obtained via direct VMC and
      $\kappa_a$ is the output of Algorithm~\ref{alg:scale}.}
    \label{tab:experiments:logNormal:M_inc}
\end{table}

Table~\ref{tab:experiments:logNormal:M_inc} shows errors of the approximations $\gamma_{\mathrm{VMC}}$ and $\kappa_{\mathrm{VMC}}$ obtained via the VMC method and of the output $\kappa_a$ of Algorithm~\ref{alg:scale:expTT} for different expansion lengths $M$.
Algorithm~\ref{alg:expTT} converges in less than $10$ iterations to the prescribed tolerance of $\varepsilon=10^{-8}$.

When using the generic VMC approach to directly reconstruct an approximation $\kappa_{\mathrm{VMC}}$ of $\kappa$ from samples with the same stochastic dimensions and $N_{\mathrm{VMC}}=10^{4}$, the relative error of $\varepsilon_\kappa^\infty(\kappa_{\mathrm{VMC}})$ seems to stagnate independent of $M$ at about $10^{-2}$, which exceeds the error of our method by three orders of magnitude.

Even though the exponent $\gamma$ does not satisfy the conditions of Corollary~\ref{cor:univariate:boundedness_of_error}, the relative discrete residual $\operatorname{res}(\bullet)$ behaves (up to a multiplicative constant) similarly to $\varepsilon_{\kappa}^\infty(\bullet)$ independent of the number of modes $M$, the degrees of freedom of the FE space or the reconstruction method.

The error $\varepsilon_{\kappa}^\infty(\kappa_a)$ is comparable to the approximation results of~\cite{DS19} and about one order of magnitude smaller then the ones reported in~\cite{EHLMW14,DKLM15}, which suggests that our method compares favourably to these state of the art algorithms.
Table~\ref{tab:experiments:logNormal:DoF_inc} shows errors and computation times for the reconstruction of $\kappa$ for a fixed number of modes $M=20$ and an increasing number of FE degrees of freedom.
The dimension of the finite element space does not seem to have any influence on either the relative approximation error $\varepsilon_\kappa^\infty(\kappa_a)$ or the discrete residual $\operatorname{res}(\kappa_a)$.

\begin{table}[htpb]%
    \centering
    \ra{1.1}  %
    \begin{tabular}{ccccccc}\toprule
      $\operatorname{DoFs}$ &
      $\varepsilon_\gamma(\gamma_{\mathrm{VMC}})$ &
      $\operatorname{res}(\kappa_\mathrm{VMC})$ &
      $\varepsilon_\kappa^\infty(\kappa_{\mathrm{VMC}})$ &
      $\operatorname{res}(\kappa_a)$ &
      $\varepsilon_\kappa^\infty(\kappa_a)$ &
      time [s] \\
      \midrule
      $441  $  &  $1.83\cdot 10^{-9} $  &  $1.06$  &  $1.98\cdot 10^{-2}$  &  $1.54\cdot 10^{-4}$  &  $9.01\cdot 10^{-6}$  &  $ 1412.94$ \\
      $2601 $  &  $2.68\cdot 10^{-9} $  &  $1.88$  &  $1.76\cdot 10^{-2}$  &  $3.64\cdot 10^{-4}$  &  $1.80\cdot 10^{-5}$  &  $ 1865.92$ \\
      $6561 $  &  $3.71\cdot 10^{-10}$  &  $5.36$  &  $2.08\cdot 10^{-2}$  &  $5.55\cdot 10^{-4}$  &  $2.25\cdot 10^{-5}$  &  $ 4043.40$ \\
      $10201$  &  $1.22\cdot 10^{-9} $  &  $5.79$  &  $2.40\cdot 10^{-2}$  &  $8.64\cdot 10^{-4}$  &  $9.83\cdot 10^{-5}$  &  $12788.44$ \\
      \bottomrule
    \end{tabular}
    \caption{Relative appoximation errors and computation time for the
      approximation of the log-normal diffusion coefficient $\kappa$ for different
      numbers of FE degrees of freedom. The computation is done on uniform
      triangulations of $D$ with $M=20$ parameters and uses stochastic
      polynomials of degree smaller or equal than $10$ in each mode.  Here,
      $\kappa_{\mathrm{VMC}}$ is an approximation of $\kappa$ obtained via direct VMC and
      $\kappa_a$ is the output of Algorithm~\ref{alg:scale}.}
    \label{tab:experiments:logNormal:DoF_inc}
\end{table}

The computation time of our algorithm increases drastically as the number of FE DoFs get larger.
However, we suspect that this behavior originates from the discretization of the FE space.
As discussed in Section~\ref{sec:multivariate:TTformat}, the ranks of the operator $\discretized{W}$ and right-hand side $\discretized{b}$ depend quadratically on the ranks of the exponent $\gamma_\mathrm{VMC}$.
In our case the ranks are bounded by around $r=20$ for the first component tensor and the ranks decrease with the distance to the first component.
We also observe that the maximal ranks of $\gamma_{\mathrm{VMC}}$ increase as $M$ gets larger.
To improve storage capacity of the ALS algorithm, we \emph{round} $\discretized{W}$ to a precision of $10^{-12}$ by applying a truncated SVD to each component of the TT operator, which significantly reduces the ranks.
However, this process is computationally expensive as the finite element component of $\discretized{W}$ consists of a high-dimensional tensor whose sparsity is lost upon rounding.
This in turn increases storage capacity and computation time of the truncated SVD.
A different choice of spatial discretization by e.g.\ a reduced basis approach~\cite{Chen2013} could decrease the dimension of the deterministic approximation space and thus possibly reduce the computation time significantly.
That the computation time increases with the cardinality of the spatial discretization can also be observed in the next section on log-likelihood reconstruction.
A verification of this and possible improvements are subject to future work.

\subsection{Correlated Gaussian density approximation}%
\label{exptt:sec:experiments:correlated_gaussian_density_results}

A second field of possible applications of the discussed method is the reconstruction of probability densities or Bayesian likelihoods from approximations of their respective logarithms.
This section aims to investigate the performance of Algorithm~\ref{alg:scale} for the distribution of highly correlated and jointly Gaussian random variables, which often occur in practice when data are informative.
Denote by $J\in\mathbb{R}^{M\times M}$ the matrix of all ones and let $I\in\mathbb{R}^{M\times M}$ be the identity.
For some scale $\mu\in[0,1]$ we consider the covariance matrix $\Sigma_\mu = \mu I + (1-\mu)J$ and aim to reconstruct the probability density function of $\mathcal{N}(0,\Sigma_\mu)$, namely
\begin{equation*}
  \rho(y) = \frac{1}{(2\pi)^{M/2}\sqrt{\operatorname{det}(\Sigma_\mu)}} \exp\bigl( -\frac{1}{2} 
  y \cdot \Sigma_\mu^{-1} y
  \bigr).
\end{equation*}
We reconstruct $\rho$ for different choices of $\mu$ using Algorithm~\ref{alg:scale} without any additional scaling, i.e.\ $s=0$.
The experiments are conducted with $M=10$ parameters and we choose the quite large uniform polynomial degree $d_{\mathrm{a}}=60$ as approximation dimension since the highly correlated density is difficult to approximate on a tensor grid of univariate polynomials.
As $\log \rho$ is a quadratic polynomial in $y$,
we reconstruct an approximation of $\log\rho$ with the VMC method using $N_{\mathrm{VMC}}=10^{4}$ samples and a uniform polynomial degree of two, i.e.\ $d_h=3$.
The initial point for Algorithm~\ref{alg:scale} is chosen as $y_0=0\in\mathbb{R}^M$ and we use $\varepsilon=10^{-8}$ as a stopping criterion for the ALS algorithm.
To compare the results, we again compute an approximation of $\rho$ directly via VMC using the same number of training samples as in the reconstruction of $\log\rho$.

\begin{table}[htpb]%
    \centering
    \ra{1.1}  %
    \begin{tabular}{cccccccc}\toprule
      $\mu$
      & $\varepsilon_{\log\rho}^\infty(\log\rho_{\mathrm{VMC}})$
      & $r_{\mathrm{max}}(\rho_{\mathrm{a}})$
      & $\operatorname{res}(\rho_{\mathrm{a}})$
      & $\varepsilon_{\rho}^\infty(\rho_{\mathrm{a}})$
      & $r_{\mathrm{max}}(\rho_{\mathrm{VMC}})$
      & $\operatorname{res}(\rho_{\mathrm{VMC}})$
      & $\varepsilon_{\rho}^\infty(\rho_{\mathrm{VMC}})$\\
      \midrule
      $  1$ & $7.22\cdot 10^{-8}$ & $12$ & $1.05\cdot 10^{-7}$ & $5.53\cdot 10^{-7}$ & $1$ & $5.00\cdot 10^{-7}$ & $2.31\cdot 10^{-2}$\\
      $0.8$ & $9.00\cdot 10^{-8}$ & $12$ & $6.10\cdot 10^{-8}$ & $4.93\cdot 10^{-6}$ & $4$ & $1.42\cdot 10^{-6}$ & $7.10\cdot 10^{-2}$\\
      $0.6$ & $3.23\cdot 10^{-7}$ & $12$ & $8.71\cdot 10^{-8}$ & $1.85\cdot 10^{-3}$ & $4$ & $4.30\cdot 10^{-6}$ & $4.55\cdot 10^{-1}$\\
      $0.4$ & $6.34\cdot 10^{-7}$ & $12$ & $3.59\cdot 10^{-8}$ & $3.57\cdot 10^{-2}$ & $3$ & $3.13\cdot 10^{-5}$ & $6.49\cdot 10^{-1}$\\
      $0.2$ & $2.24\cdot 10^{-6}$ & $12$ & $4.83\cdot 10^{-7}$ & $1.08\cdot 10^{-1}$ & $2$ & $3.69\cdot 10^{-4}$ & $9.24\cdot 10^{-1}$\\
      \bottomrule
    \end{tabular}
    \caption{Maximum approximation rank $r_{\mathrm{max}}$, residual $operatorname{res}$ and relative $L^\infty$ errors for the VMC reconstruction of $\log\rho$, $\rho$ and the output of Algorithm~\ref{alg:scale} in relation to the correlation scale $\mu$.
             Smaller values of $\mu$ imply more correlated densities $\rho$.
             The computation is carried out for $M=10$ parameters with uniform stochastic dimension $3$ for $\log\rho$ and $61$ for the approximation of $\rho$.}%
    \label{tab:experiments:logLikelihood:correlated}
\end{table}

Table~\ref{tab:experiments:logLikelihood:correlated} shows the maximal ranks, the relative residuals and $L^\infty$-errors of the VMC reconstruction of $\log\rho$ and the approximation of $\rho$ obtained via Algorithm~\ref{alg:scale} and via the VMC method, for different values of $\mu$, respectively.
An increase in the correlation of the density seems to have no effect on the maximum rank of either of the reconstructed approximations.
However, as $\mu$ decreases and thus as the density becomes more and more correlated, we notice that the relative $L^\infty$ error of all approximations increases.
The approximation obtained via a direct VMC reconstruction has a maximum relative deviation from $\rho$ of approximately $2\%$ even for the uncorrelated case.
For the maximal correlation $\mu=0.2$ investigated, the VMC approach is only able to recover the constant zero function, resulting in a relative maximum discrepancy of almost $1$.
However, in comparison to the residual of the $\rho_\mathrm{a}$, the increase of $\varepsilon_{\rho}^\infty(\rho_{\mathrm{a}})$ is rather drastic and ranges from a relative error of $10^{-7}$ for a completely uncorrelated Gaussian density to a maximum relative deviation of $11\%$ for $\mu=0.2$.
Even though these results are far from optimal, we would like to point out that the reconstruction $\rho_{\mathrm{a}}$ of the proposed algorithm is orders of magnitudes better than what the direct sample-based reconstruction via VMC can achieve with the given number of samples.
We observe that the relative residual error $\operatorname{res}(\rho_\mathrm{a})$ remains constant in contrast to the increase of $\varepsilon_\rho^\infty(\rho_\mathrm{a})$.
On the other hand, $\operatorname{res}(\rho_\mathrm{VMC})$ increases in conjunction with $\varepsilon_\rho^\infty(\rho_\mathrm{VMC})$.
This verifies that no equivalence of the residual and the $L^\infty$-error can be assumed but that they might correlate for approximations not obtained via the (approximate) Galerkin projection of Algorithm~\ref{alg:scale}.
Moreover, we suspect that the large errors occurring for highly correlated densities are not intrinsic to the method presented in this work (nor the VMC method), but rather a hard problem for any tensorized approach due to the choice of a Cartesian coordinate system.
This problem can possibly be alleviated by a suitably chosen basis transform as e.g.\ proposed in~\cite{EGM20}.

\subsection{Bayesian likelihood approximation}%
\label{exptt:sec:experiments:logLikelihood_results}

Lastly, the following experiment examines the approximation quality of our approach for the Bayesian likelihood~\eqref{eq:experiments:bayesPotential}.
The forward map $\hat{G}(y) = w(y)\in H_0^1(D)$ is determined by the solution of the stationary diffusion problem~\eqref{exptt:eq:experiments:darcy} with log-normal random permeability $\kappa\in L^2(\mathbb{R}^M,\rho;L^\infty(D))$ which is specified by the affine exponent~\eqref{eq:experiments:KLexpansion}--\eqref{eq:experiments:KLexpansion:h_m} and constant right-hand side $f = 1$.
The parameter to observation map $G$ is the FE solution of $\hat{G}$ discretized with a lowest order conforming Lagrange finite element method with $J=2601$ degrees of freedom in the physical space and a maximal polynomial chaos degree of $2$ for all stochastic modes.
The observation
\begin{equation*}
    \delta = G(y^*) + \eta
\end{equation*}
is a perturbed realization of $G$ for some random sample $y^*\sim\mathcal{N}(0,I)$
where the perturbation noise $\eta$ is chosen with covariance $\Sigma = \sigma^2 I$ for $\sigma=10^{-3}$.
This introduces a relative measurement noise to the values of $G$ of about $5\%-10\%$.

\begin{table}[htpb]%
    \centering
    \ra{1.1}  %
    \begin{tabular}{cccccc}\toprule
      $M$ &
      $\mathcal{E}_G(G_{\mathrm{VMC}})$&
      $\mathcal{E}_\ell(\ell_{\mathrm{VMC}})$&
      $\Vert \discretized{B}\discretized{L_\mathrm{a}}-\discretized{f}\Vert_2$ &
      $\mathcal{E}_L(L_{a})$&
      $\mathcal{E}_{\hat{L}}(L_{a})$ \\
      \midrule
      $5 $  &  $2.80\cdot 10^{-4}$  &  $3.24\cdot 10^{-4}$  &  $8.46\cdot 10^{-7}$  &  $2.57\cdot 10^{-4}$  &  $2.79\cdot 10^{-6}$ \\
      $10$  &  $2.82\cdot 10^{-4}$  &  $4.30\cdot 10^{-4}$  &  $1.06\cdot 10^{-6}$  &  $2.69\cdot 10^{-4}$  &  $6.75\cdot 10^{-6}$ \\
      $20$  &  $3.04\cdot 10^{-4}$  &  $4.83\cdot 10^{-4}$  &  $8.12\cdot 10^{-7}$  &  $2.97\cdot 10^{-4}$  &  $4.97\cdot 10^{-6}$ \\
      $30$  &  $3.40\cdot 10^{-4}$  &  $3.53\cdot 10^{-4}$  &  $3.46\cdot 10^{-6}$  &  $3.68\cdot 10^{-4}$  &  $2.64\cdot 10^{-6}$ \\
      $40$  &  $3.49\cdot 10^{-4}$  &  $3.30\cdot 10^{-4}$  &  $1.67\cdot 10^{-5}$  &  $7.12\cdot 10^{-4}$  &  $3.14\cdot 10^{-5}$ \\
      \bottomrule
     \end{tabular}
     \caption{Absolute approximation errors for the
       approximation of the forward model $G$, the log-likelihood $\ell$
       and the likelihood $L$ for different expansion dimensions $M$.
       The forward model is discretized on a uniform triangulation with $5000$ triangles ($2601$ FE DoFs).
       Here, $\hat{L}=\exp \ell_{\mathrm{VMC}}$ is used as a reference
       for the error of our method.}%
     \label{tab:experiments:logLikelihood:abs}
 \end{table}

The absolute approximation errors for different quantities are depicted in Table~\ref{tab:experiments:logLikelihood:abs}.
Relative errors are shown in Table~\ref{tab:experiments:logLikelihood:rel}.
The VMC approximation of the forward map is denoted by $G_{\mathrm{VMC}}$ and with $\ell_{\mathrm{VMC}}$ we denote the TT representation of the log-likelihood that is computed algebraically from $G_{\mathrm{VMC}}$.
The likelihood approximations of our method is labeled by $L_a$ where the stochastic discretization space for each mode is restricted to polynomials of maximal degree $3$.
As stopping tolerance for our method, we set $\varepsilon=10^{-8}$.
Due to the relatively small function values of the log-likelihood, it suffices to set the scaling to $s=0$.
This implies $\tilde{d}_a = d_a = 4$ and renders the choice of $\varepsilon_s$ irrelevant.
As initial point for the method we choose $y_0=0\in\mathbb{R}^M$.
To determine if the approximation accuracy is limited by our method or by the reconstruction of the forward model $G$, we additionally compute the error between the likelihood approximation $L_a$ and samples $\hat L(y) = \exp \ell_{\mathrm{VMC}}(y)$ for $y\sim\mathcal{N}(0,I)$.
Finally, we compare the approximation obtained by our method to a merely sample based VMC tensor reconstruction $L_{\mathrm{VMC}}$ where the stochastic discretization space for each mode is restricted to polynomials of maximal degree $3$ as well.
We observe that $N_\mathrm{VMC}=10^3$ samples seem sufficient for the reconstructions and an increase of $N_\mathrm{VMC}$ yields no significant improvements.

\begin{table}[htpb]%
    \centering
    \ra{1.1}  %
    \begin{tabular}{ccccccc}\toprule
      $M$ &
      $\varepsilon_G(G_{\mathrm{VMC}})$&
      $\varepsilon_\ell(\ell_{\mathrm{VMC}})$&
      $\varepsilon_L(L_{\mathrm{VMC}})$&
      $\varepsilon_L(L_{a})$&
      $\varepsilon_{\hat{L}}(L_{a})$&
      time [s] \\
      \midrule
      $5 $  &  $1.86\cdot 10^{-2}$  &  $85.7$  &  $1.19\cdot 10^{-4}$  &  $2.57\cdot 10^{-4}$  &  $2.79\cdot 10^{-6}$  &  $0.09$ \\
      $10$  &  $1.87\cdot 10^{-2}$  &  $86.3$  &  $1.13\cdot 10^{-4}$  &  $2.69\cdot 10^{-4}$  &  $6.75\cdot 10^{-6}$  &  $0.78$ \\
      $20$  &  $2.02\cdot 10^{-2}$  &  $84.7$  &  $1.06\cdot 10^{-4}$  &  $2.97\cdot 10^{-4}$  &  $4.97\cdot 10^{-6}$  &  $0.29$ \\
      $30$  &  $2.26\cdot 10^{-2}$  &  $87.0$  &  $1.13\cdot 10^{-4}$  &  $3.68\cdot 10^{-4}$  &  $2.64\cdot 10^{-6}$  &  $0.77$ \\
      $40$  &  $2.32\cdot 10^{-2}$  &  $87.2$  &  $1.36\cdot 10^{-4}$  &  $7.12\cdot 10^{-4}$  &  $3.14\cdot 10^{-5}$  &  $1.67$ \\
      \bottomrule
    \end{tabular}
    \caption{Relative approximation errors for the
      approximation of the forward model $G$, the log-likelihood $\ell$
      and the likelihood $L$ for different expansion dimensions $M$.
      The forward model is discretized on a uniform triangulation with $5000$ triangles ($2601$ FE DoFs).
      Here, $\hat{L}=\exp \ell_{\mathrm{VMC}}$ is used as a reference
      for the error of our method. The last column is the measured time our algorithm
      requires to compute the likelihood $L_a$.}%
    \label{tab:experiments:logLikelihood:rel}
\end{table}

In Table~\ref{tab:experiments:logLikelihood:abs} it can be seen that the approximation of the forward model $G$ and the approximation of the log-likelihood $\ell$ seem to stagnate at an error of $10^{-4}$ independent of the number of modes.
Note that the error of the latter directly depends on the error of the former.
The absolute approximation quality of $L_a$ has the same order of magnitude as the one of $G_{\mathrm{VMC}}$, which is expected and also observed in e.g.~\cite{dolgov2019TTdensities}.
However, the last column of Table~\ref{tab:experiments:logLikelihood:abs} verifies that the main contribution of the approximation error originates from the error of the approximation of the forward map $G$ and not from Algorithm~\ref{alg:expTT}.
Here we assume the log-likelihood $\ell_{\mathrm{VMC}}$ to be exact and compute the error with respect to $\hat{L}=\exp\ell_{\mathrm{VMC}}$ instead of $L=\exp\ell$.

The stagnating error of $G_{\mathrm{VMC}}$ in Table~\ref{tab:experiments:logLikelihood:rel} indicates that VMC is not capable of recovering the solution of the forward problem with arbitrary accuracy without increasing the number of samples.
This entails a large approximation error for $\ell_{\mathrm{VMC}}$.
In conjunction with the fact that $\mathbb{E}[\vert\ell\vert] \ll 1$ this explains the large relative error ($>80$) of the log-likelihood approximation.
The relative error of the sample based tensor regression $L_{\mathrm{VMC}}$ is only slightly smaller than the approximation error of our method.
However, it is worth mentioning that the experiments suggest that our method only takes about one tenth of the computation time.
The last column of Table~\ref{tab:experiments:logLikelihood:rel} shows the measured time of our algorithm to compute the approximation $L_a$ from $\ell_{\mathrm{VMC}}$.
Similar to the approximation errors, the runtime seems to be rather independent from the number of expansion dimensions $M$ or increases only slightly as $M$ increases.
This is a very different behaviour when compared to the results of Section~\ref{exptt:sec:experiments:darcy_results}.
However, as mentioned before, the computation time seems to be correlated to (due to our ``naive'' implementation) the dimension of the deterministic component, which explains the short running times of our algorithm for the approximation of the real-valued likelihood $L$.

\section{Discussion}%
\label{sec:conclusion}

We derive a novel numerical approach to compute a low-rank approximation of the exponential of a multivariate function.
We assume that the exponent is given with respect to an orthonormal basis of (tensor product Hermite) polynomials and that the coefficient tensor of the expansion is in the tensor train (TT) format.
The central idea is to consider the exponential as the solution of a system of ordinary differential equations.
This allows us to approximate the function via a Galerkin projection method.
The Laplace-like structure of the corresponding operator and right-hand side allow for an efficient representation in the TT format, which renders the problem amenable to the ALS method.
We establish that the residual minimized by the ALS is equivalent to a certain energy norm up to a data oscillation term.
This not only implies that the ALS minimizes the distance to the exact solution in the energy norm but also that the resulting residual provides an error estimator for the solution, which in principle could be used for an adaptive refinement algorithm as well as in conjunction with other approximation algorithms, like cross approximation.

The algorithm is tested for the reconstruction of two different log-normal diffusion coefficients of a random elliptic PDE, a strongly correlated Gaussian density and a Bayesian likelihood, where the forward map is given by a  polynomial chaos surrogate in TT format.
We compare our results to established methods and to a black-box sample based reconstruction algorithm.
We observe that the performance of our approach is state-of-the-art with respect to the approximation accuracy, computation time and storage capacity for up to $M=40$ stochastic dimensions and large polynomial degrees.
Almost all computations are carried out on a common desktop computer\footnote{$2.1\,\mathrm{GHz}$ Intel Core i3 processor and $16\,\mathrm{GB}$ of memory.} with the exception of the experiments with large FE dimensions (last two rows in Table~\ref{tab:experiments:logNormal:DoF_inc}), for which slightly more memory was required to assemble the operator $\discretized{W}$.
However, it should be noted that the computation of an approximation of the log-normal diffusion field as investigated in Section~\ref{exptt:sec:experiments:darcy_results} can be quite expensive computationally for large FE dimensions.
In particular, our choice of spatial discretization in conjunction with our unoptimized implementation leads to long computations as the spatial dimension or the number of stochastic modes increase.
Nevertheless, we are confident that this can be alleviated by a more appropriate choice of discretization of the physical space and an optimized implementation for the assembly of the operator, which we leave as a subject for future works.
\footnote{The code implementing the numerical experiments is freely available at \url{https://bitbucket.org/trunschk/adaptive_vmc}.}

It should be emphasized that in principle the scope of our method reaches far beyond what is discussed and illustrated in this work.
The proposed method is applicable to a wide range of holonomic-like functions such as algebraic functions, sine and cosine, the error function, Bessel functions and hypergeometric functions.
Moreover, since the sought function may satisfy multiple differential equations it is possible to choose one for which the induced energy norm is best suited for the problem at hand.
This may also be obtained by considering other orthogonal bases, e.g.\ from the Askey scheme.
The dependence of the energy norm on the dynamical system clearly highlights a limitation of our current theory and it would be interesting to investigate for which classes of dynamical systems an equivalence of the energy norm to more convenient norms like the $L^2$ or $H^1$ norm can be established.

The exponential field approximation discussed here could be used to develop fully adaptive approximation schemes for the solutions of parametric PDEs similar to~\cite{EMPS20} with the crucial advantage that only pointwise evaluations of the solution of the considered PDE are required.
In contrast to the involved intrusive stochastic Galerkin methods of~\cite{EPS17,EMPS20}, a black-box adaptive non-intrusive method could be devised which still yields the Galerkin solution with high probability.

On a more practical side, it is possible to apply our algorithm to obtain a functional representations of a Bayesian posterior densities.
This allows, among other things, a very efficient computation of statistical quantities such as mean, variance, higher order moments and marginals (cf.~\cite{EGM20}) or fast generation of independent posterior samples (cf.~\cite{dolgov2019TTdensities}).
This is important in many reconstruction tasks such as~\cite{andrle2021invertible,Farchmin2020}.

\section*{Acknowledgements}
M.\ Eigel acknowledges the partial support of the DFG SPP 1886 ``Polymorphic Uncertainty Modelling for the Numerical Design of Structures''.
N.\ Farchmin has received funding form the German Central Innovation Program (ZIM) No. ZF4014017RR7.
P.\ Trunschke acknowledges support by the Berlin International Graduate School in Model and Simulation based Research (BIMoS).
The authors would like to thank Maren Casfor, Michael G\"otte, Robert Lasarzik, Mathias Oster, Leon Sallandt and Reinhold Schneider for fruitful discussions.

\printbibliography

\end{document}